\documentclass[11pt]{article}

\oddsidemargin 8mm
\textwidth 148mm
\textheight 225mm
\topskip 0pt
\topmargin -1cm

\usepackage{amsfonts, amssymb}
\usepackage{amsmath}
\usepackage{color}
\usepackage{indentfirst}
\usepackage{verbatim}   
\usepackage{mathrsfs}   
\usepackage{yhmath}     

\newtheorem{theorem}{Theorem}[section]
\newtheorem{proposition}[theorem]{Proposition}
\newtheorem{lemma}[theorem]{Lemma}
\newtheorem{corollary}[theorem]{Corollary}
\newtheorem{prop-def}{Proposition-Definition}[section]

\newtheorem{defi}[theorem]{Definition}

\newtheorem{exam}[theorem]{Example}

\newenvironment{proof}{\trivlist \item[\hskip \labelsep{\it Proof.}]}{
 \endtrivlist}


\begin{document}

\title{Conjugacy Classes of Renner Monoids
\footnote{Project supported by national NSF of China (No 11171202).} }
\date{}
\author{Zhuo Li, Zhenheng Li, You'an Cao}
\maketitle

\vspace{ -1.2cm}

\vskip 7mm

\begin{abstract}
In this paper we describe conjugacy classes of a Renner monoid $R$ with unit group $W$, the Weyl group. We show that every element in $R$ is conjugate to an element $ue$ where $u\in W$ and $e$ is an idempotent in a cross section lattice. Denote by $W(e)$ and $W_*(e)$ the centralizer and stabilizer of $e\in \Lambda$ in $W$, respectively. Let $W(e)$ act by conjugation on the set of left cosets of $W_*(e)$ in $W$. We find that $ue$ and $ve$ ($u, v\in W$) are conjugate if and only if $uW_*(e)$ and $vW_*(e)$ are in the same orbit. As consequences, there is a one-to-one correspondence between the conjugacy classes of $R$ and the orbits of this action. We then obtain a formula for calculating the number of conjugacy classes of $R$, and describe in detail the conjugacy classes of the Renner monoid of some $\cal J$-irreducible monoids.

We then generalize the Munn conjugacy on a rook monoid to any Renner monoid and show that the Munn conjugacy coincides with the semigroup conjugacy, action conjugacy, and character conjugacy. We also show that the number of inequivalent irreducible representations of $R$ over an algebraically closed field of characteristic zero equals the number of the Munn conjugacy classes in $R$.

\vspace{ 0.3cm}
\noindent
{\bf Keywords:} Conjugacy, Renner Monoid, Weyl Group.

\vspace{ 0.3cm}
\noindent {\bf Mathematics Subject Classification 2010:}
20M32, 20G99.

\end{abstract}

\def\J {{\cal J}}
\def\a {\alpha}
\def\b {\beta}
\def\e {\varepsilon}
\def\s {\sigma}
\def\v {\vec}
\def\js {(\cal J, \sigma)}

\def\F {{\cal F}}
\def\ua {\underline a}
\def\ut {\underline t}
\def\e {\varepsilon}

\def\n { {\bf n} }
\def\r { {\bf r} }
\def\ek {\eta_{K}}
\def\eJ {\eta_{J}}
\newcommand{\be }{ \begin {}\\ \end{} }

\def\l {\langle}
\def\ra {\rangle}

\baselineskip 18pt

\section{Introduction}
There are different conjugacy relations in semigroup theory of which the following two are commonly studied ones. Let $R$ be a monoid with unit group $W$. Two elements $\sigma, \tau\in R$ are conjugate, denoted by $\sigma\sim \tau$, if there is $w\in W$ such that $\tau =  w \sigma w^{-1}$. Let $S$ be a semigroup. Then elements $\sigma, \tau\in S$ are called {\it primarily $S$-conjugate} if there are $x, y\in S$ for which $\sigma=xy$ and $\tau=yx$. This latter relation is reflexive and symmetric, but not transitive. Let $\equiv$ be its transitive closure, called semigroup conjugacy. If two elements in a monoid are $\sim$-conjugate, then they are semigroup conjugate. In a group these two conjugacy relations coincide and is equal to the usual group conjugacy.

The purpose of this paper is two-folded. We first describe the $\sim$-conjugacy classes in a Renner monoid $R$, and then investigate the connections between the semigroup conjugacy classes and irreducible representations of $R$. A Renner monoid is a finite inverse monoid induced from inductive (algebraic) monoids \cite{PU1, R1, R3, LS1}. It plays the same role for reductive monoids as the Weyl group does for reductive groups. The unit group of a Renner monoid $R$ is a Weyl group $W$. The symmetric inverse semigroup is a traditional example of Renner monoids; more examples can be found \cite{Zhenheng, LR}.

\subsection{The $\sim$-conjugacy classes}

The $\sim$-conjugacy in a monoid has been studied intensively with many useful results. It is well-known that two elements in a symmetric inverse semigroup $R_m$, called a rook monoid in combinatorics, are conjugate if and only if they have the same cycle-link types \cite{Lip}. A one-to-one correspondence between conjugacy classes in a symplectic rook monoid and (symplectic) partitions is established in \cite{CLL}, with precise formulas for calculating the number of conjugacy classes and formulas for computing the order of each class given. For a reductive monoid $M$, Putcha \cite{PU3} showed that there exist affine subsets $M_1, M_2, ..., M_k$ such that every element of $M$ is conjugate to an element of some $M_i$. Furthermore, he gave a necessary and sufficient condition for two elements in $M_i$ to be conjugate. Note that elements in different $M_i$ and $M_j$ may be conjugate. He then described precisely the conjugacy of elements in $M_i$ and $M_j$ with $i\ne j$ \cite{PU4}. Renner \cite{R2} investigated properties of conjugacy classes of semisimple elements in reductive monoids. Carter gave a complete description of conjugacy classes in the Weyl group \cite{C1, C2}; Humphreys summarized different developments of conjugacy for semisimple algebraic groups \cite{H2}.

In this paper, we first investigate the $\sim$-conjugacy classes of a Renner monoid $R$ using parabolic subgroups of the Weyl group $W$. To be specific, let $\Lambda$ be a cross section lattice of $M$. We obtain that every element in $R$ is conjugate to an element $ue$ for some $u\in W$ and $e\in \Lambda$. Let $W(e)$ and $W_*(e)$ be the centralizer and stabilizer of $e\in \Lambda$ in $W$, respectively (for definitions, see Section 2.3 below). Denote by $W/W_*(e)$ the set of left cosets of $W_*(e)$ in $W$ and let $W(e)$ act on $W/W_*(e)$ by conjugation. We show in Theorem 3.4 that if $u, v \in W$, then two elements $ue, ve\in R$ are conjugate if and only if $uW_*(e)$ and $vW_*(e)$ are in the same orbit. As a consequence, there is a one-to-one correspondence between the conjugacy classes and all the orbits (Corollary 3.5). This leads to a formula for calculating the number of conjugacy classes in $R$ (Corollary 3.6). As examples, we describe in detail the conjugacy classes in some $\cal J$-irreducible Renner monoids of type $A_2$, $B_2$, and $G_2$.

The study of $\sim$-conjugacy classes in Renner monoids reveals many important features of their structures and representations. One useful result in group representation theory is that the number of the usual group conjugacy classes in a finite group is equal to the number of irreducible representations of the group over an algebraically closed field with characteristic not a factor of the order of the group. However, this is not the case for Renner monoid representation theory. Indeed, there are usually more $\sim$-conjugacy classes than irreducible representations. For example, the rook monoid
$$
    R_2 = \Big\{\begin{pmatrix}
                0 & 0\\
                0 & 0\\
                \end{pmatrix},
                \begin{pmatrix}
                1 & 0\\
                0 & 0\\
                \end{pmatrix},
                \begin{pmatrix}
                0 & 1\\
                0 & 0\\
                \end{pmatrix},
                \begin{pmatrix}
                0 & 0\\
                1 & 0\\
                \end{pmatrix},
                \begin{pmatrix}
                0 & 0\\
                0 & 1\\
                \end{pmatrix},
                \begin{pmatrix}
                1 & 0\\
                0 & 1\\
                \end{pmatrix},
                \begin{pmatrix}
                0 & 1\\
                1 & 0\\
                \end{pmatrix}
         \Big\}
$$
has 5 $\sim$-conjugacy classes but 4 irreducible representations over an algebraically closed field of characteristic zero \cite{LLC, LS2}.

A natural question is how to define a new conjugacy relation different from the above in Renner monoids such that the number of corresponding conjugacy classes equals the number of irreducible representations?

\subsection{The semigroup conjugacy and an analogue of Munn conjugacy}

There are many elegant results about the semigroup conjugacy in the literature. Lallement studied it for free semigroups \cite{La}. Ganyushkin and Kormysheva showed that two elements in the symmetric inverse semigroup are conjugate if and only if they have the same stable rank and the restrictions to their stable images, respectively, have the same cycle type (see \cite{GK}). Ganyushkin and Mazorchuk described this conjugacy in detail in Chapter 6 of \cite{GM}. Kudryavtseva investigated the conjugacy in regular epigroups with elegant results \cite{Ku}. Kudryavtseva and Mazorchuk investigated semigroup conjugacy, action conjugacy, and character conjugacy for different semigroups in \cite{KM0}. They showed that the semigroup conjugacy and action conjugacy (see Section 2.1 for definitions) coincide for any inverse epigroups, and that the semigroup conjugacy and the character conjugacy are the same in regular epigroups with finite $\mathcal D$-classes. They studied conjugacy in Brauer-type semigroups and semigroups of square matrices in \cite{KM1, KM2}.

Munn \cite{M2} obtained the characters of irreducible representations of the symmetric inverse semigroup $R_m$, which implicitly introduced the so-called Munn conjugacy (see Definition \ref{conjugate}). Solomon \cite{LS2} made this conjugacy explicit, denoted by $\approx$. Two elements in $R_m$ are Munn conjugate if and only if they have the same cycle types \cite{GM, M2, LS2}. Thus the Munn conjugcay is equal to the semigroup conjugacy in $R_m$. Recently, the representation theory of Munn and Solomon has been generalized to Renner monoids \cite{LLC}, showing that the irreducible representations of a Renner monoid are completely determined by those of the parabolic subgroups of the Weyl group. Steinberg developed a representation theory of finite inverse semigroups with many useful applications \cite{S2}; his results are deeper and wider.

In this paper we find an analogue of the Munn conjugacy for a Renner monoid $R$ by embedding it into some symmetric inverse semigroup determined by the vertices of a polytope associated to $R$ (see Section 4.1 for more details). We then prove that the number of the Munn conjugacy classes in $R$ equals the number of the irreducible representations of $R$ over an algebraically closed field of characteristic zero (Theorem \ref{mainTheorem}).

What is the connection between the Munn conjugacy and the semigroup conjugacy in $R$? We show that these two conjugacy relations are equal. So the analogue of the Munn conjugacy provides a new description of the semigroup conjugacy for $R$ (Theorem \ref{munnEqualSconjugacy}). We go even further: in a Renner monoid the Munn conjugacy, character conjugacy, action conjugacy, and the semigroup conjugacy are all the same (Corollary \ref{allEqual}).

In the rest of the paper, Section 2 provides necessary facts and background information. Section 3 describes the $\sim$-conjugacy classes of Renner monoids $R$. Section 4 introduces an analogue of Munn conjugacy for $R$ and finds its connection with the representation theory of $R$.

\section{Preliminaries}

Let $S$ be a semigroup and $\sigma\in S$. Denote by $H_\sigma$ the $\mathcal H$-Green relation on $S$ (see \cite{CP, Ho, GM} for Green relations). An element $\sigma\in S$ is a group-bound element if there exists a positive integer $k$ such that $\sigma^k$ lies in a subgroup of $S$. If every element of $S$ is group-bound, we call $S$ an epigroup, which is also named as a group-bound semigroup or strongly $\pi$-regular semigroup in the literature. Every finite semigroup is an epigroup; so is the full matrix monoid consisting of all square matrices over a field. Let $\sigma\in S$ be group-bound such that $H_{\sigma^k}$ is a group whose identity element is denoted by $e_\sigma$. It follows from Lemma 1 of \cite{Ku} that the identity element $e_\sigma$ is well-defined. By Corollary 1 of \cite{Ku} we have $\sigma e_\sigma = e_\sigma \sigma$ and $\sigma e_\sigma \mathcal H e_\sigma$. The element $\sigma e_\sigma$ is called the {\it invertible part} of $\sigma$.

A semigroup is an inverse semigroup if every element has a unique inverse. Let $S$ be an inverse semigroup with the natural partial order on $S^1$ given by $\sigma\ge \tau$ if and only if there is an idempotent $e\in S$ such that $\tau=\sigma e$ (see also Chapter 5 of \cite {Ho}). An inverse semigroup with unit group $G$ is said to be factorizable if for each $\sigma\in S$ there is $g\in G$ such that $\sigma \le g$. The following result from (\cite{Ku}, Theorem 3) will be useful in Section 4.

\begin{theorem}\label{gandsconjugacy} Let $S$ be a factorizable inverse epigroup and $\sigma, \tau \in S$. Then $\sigma \equiv \tau$ if and only if $\sigma e_\sigma \sim \tau e_\tau$.
\end{theorem}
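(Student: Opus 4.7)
The plan is to prove the two implications separately, exploiting factorizability to write each element in the form $\sigma = ge$ with $g\in G$ (the unit group) and $e$ an idempotent, and exploiting the epigroup hypothesis so that some power $\sigma^k$ lies in the group $\mathcal{H}$-class $H_{e_\sigma}$. The key bridge in both directions is the auxiliary fact $\sigma \equiv \sigma e_\sigma$, which I would prove once and reuse.

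For the direction $\sigma e_\sigma \sim \tau e_\tau \Rightarrow \sigma \equiv \tau$: granted the auxiliary fact and using that $\sim$ is contained in $\equiv$, the hypothesis yields the chain $\sigma \equiv \sigma e_\sigma \equiv \tau e_\tau \equiv \tau$. To prove the auxiliary fact, I would factor $\sigma = ge$ and observe that $\sigma = g\cdot e$ is primarily conjugate to $e\cdot g = g(g^{-1}eg)$. Iterating yields a sequence of primarily conjugate elements of the form $g\cdot f_i$ with idempotents $f_i = g^{-i}eg^{i}$. Since the set of idempotents met by powers of $\sigma$ is finite, and since in an inverse semigroup idempotents commute, the $f_i$ stabilize at an idempotent related to $e_\sigma$, and $\sigma e_\sigma$ is reached at the end of the chain.

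For the direction $\sigma \equiv \tau \Rightarrow \sigma e_\sigma \sim \tau e_\tau$: since $\equiv$ is the transitive closure of primary conjugacy, it suffices to treat the case $\sigma = xy$, $\tau = yx$. The identities $\sigma^{n} = x\tau^{n-1}y$ and $\tau^{n} = y\sigma^{n-1}x$ show that high powers of $\sigma$ and $\tau$ are mutual translates of each other. Taking $n$ large enough for both powers to lie in their group $\mathcal{H}$-classes, one can rewrite $\tau e_\tau = y\,(\sigma e_\sigma)\,x'$ for a suitable $x'$ differing from $x$ by an idempotent adjustment, which pins down the relationship $e_\tau = y\,e_\sigma\,x'$ after standard manipulation with inverses. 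Finally, factorizability furnishes a unit $w\in G$ with $y \le w$, and a direct check shows that $w$ itself (or $w$ suitably modified by a unit covering $x'$) conjugates $\sigma e_\sigma$ to $\tau e_\tau$ inside $G$.

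The main obstacle will be the bookkeeping with $e_\sigma$ and $e_\tau$ under primary conjugation: since they are defined via high powers, tracing them through the swap $xy \leftrightarrow yx$ requires careful management of which idempotents dominate which, and then producing a conjugator in $G$ rather than merely in $S$. Factorizability is indispensable for this last step: without it, the conjugator would only live in $S$, yielding a strictly weaker conclusion than $\sim$.
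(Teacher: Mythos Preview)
The paper does not prove this statement at all: Theorem~\ref{gandsconjugacy} is quoted verbatim from Kudryavtseva (\cite{Ku}, Theorem~3) in the Preliminaries section, with no argument given. There is therefore no ``paper's own proof'' to compare your proposal against; the result is imported wholesale and then applied in the proof of Theorem~\ref{munnEqualSconjugacy}.

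As for your outline itself, the overall architecture is the right one and matches the strategy in \cite{Ku}: establish $\sigma \equiv \sigma e_\sigma$ once, use it together with $\sim\,\subseteq\,\equiv$ for the backward direction, and for the forward direction reduce to a single primary step $\sigma=xy$, $\tau=yx$ and push through to a unit conjugator via factorizability. One point where your sketch is genuinely loose is the proof of the auxiliary fact. Writing $\sigma = ge$ and iterating primary conjugacy produces the chain $ge \equiv gf_1 \equiv gf_2 \equiv \cdots$ with $f_i = g^{-i}eg^{i}$, but $\sigma e_\sigma$ equals $g\,(f_0 f_1\cdots f_{k-1})$, a \emph{product} of these idempotents, not any single $gf_i$; the sequence $(f_i)$ need not stabilise at $e_\sigma$ even when $\sigma^k$ lies in its group $\mathcal H$-class. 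You will need an additional device---for instance interleaving the primary moves with multiplications by previously accumulated idempotents, or appealing directly to the epigroup structure as in \cite{Ku}---to actually land on $\sigma e_\sigma$. The forward direction sketch is closer to complete, though the phrase ``suitably modified by a unit covering $x'$'' hides the real work of checking that the candidate $w$ conjugates $\sigma e_\sigma$ exactly to $\tau e_\tau$ rather than to something merely $\mathcal H$-related.
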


\subsection{Action conjugacy and character conjugacy}
We define action conjugacy and character conjugacy in a semigroup, and refer the reader to \cite{KM0} for more details. These two conjugacies will be used to compare with the Munn conjugacy in Section 4. Let $S$ be an inverse epigroup. Define a partial action of $S^1$ on $S$ by
\[
	\sigma \cdot x = \begin{cases} \sigma x\sigma^{-1}, \quad\quad \text{ if } \sigma^{-1}\sigma \ge e_x; \\
				  \text{undefined,} \quad \text{otherwise}.
                \end{cases}
\]

It follows from Lemma 1 in \cite{KM0} that if $\sigma, \tau \in S^1$ and $x\in S$ then $\tau\sigma\cdot x$ is defined if and only if
$\sigma\cdot x$ and $\tau\cdot (\sigma\cdot x)$ are both defined, in which $\tau\sigma\cdot x = \tau\cdot (\sigma\cdot x)$.

We call $x, y\in S$ {\it primary action conjugate} if there is $\sigma\in S^1$ for which $y=\sigma\cdot x$ or $x=\sigma\cdot y$.
This relation is reflexive and symmetric, but not necessarily transitive. Its transitive closure is called {\it
action conjugacy}.

Two elements $x, y$ in a semigroup $S$ are referred to as {\it character conjugate} if for every finite-dimensional
complex representation $\phi$ of $S$ we have $\chi_\phi(x) = \chi_\phi(y)$, where $\chi_\phi$ is the character
of $\phi$.

\subsection {Renner monoids}
Every linear algebraic monoid is an epigroup (\cite{PU1} Theorem 3.18). A linear algebraic monoid $M$ over an algebraic closed field is an affine algebraic variety together with an associative morphism from $M\times M$ to $M$ and an identity element $1\in M$. An irreducible algebraic monoid is a linear algebraic monoid whose underlying affine variety is irreducible; equivalently, $M$ is not the union of two proper closed nonempty subsets. The unit group of $M$, consisting of all invertible elements in $M$, is an algebraic group. An irreducible algebraic monoid is reductive if its unit group is a reductive group.

Let $M$ be a reductive algebraic monoid, $T\subseteq G$ a maximal torus
of the unit group $G$, $B\subseteq G$ a Borel subgroup with
$T\subseteq B$, $N$ the normalizer of $T$ in $G$, $\overline N$ the
Zariski closure of $N$ in $M$. Then $\overline N$ is a unit regular
inverse monoid which normalizes $T$, so $R=\overline N/T$ is a monoid and
        $$
        R=\overline N/T \supseteq N/T=W, \text { the Weyl group.}
        $$
\begin{defi} The monoid $R$ is called the Renner monoid of $M$.
\end{defi}

The Renner monoid $R$ is a finite factorizable inverse monoid, and hence an epigroup. The unit group of $R$ is a Weyl group. The idempotents in $R$ are exactly those in $\overline T$, the Zariski closure of $T$ in $M$. Moreover,
\[
    M = \bigsqcup_{r\in R} BrB, \quad \text{disjoin union}
\]
and if $s$ is a simple reflection then $BsB \cdot BrB \subseteq BsrB \cup BrB.$

\subsection{Cross section  lattice}
Denote by $E(\overline T) = \{e \in \overline T \mid e^2 = e\}$ the set of idempotents in $\overline T$. Partially order this set by defining
\[
    e \leq f \Leftrightarrow fe = e = ef.
\]
Then $E(\overline T)$ is a lattice with $e\wedge f = ef.$ The sublattice of $E(\overline T)$ given below
$$
        \Lambda = \{e\in E(\overline T)\mid Be=eBe\}
$$
is called the cross section lattice of $M$ and $R$. It is a useful concept, since
\[
    M = \bigsqcup\limits_{e \in \Lambda} GeG  \quad\text{and}\quad  R = \bigsqcup\limits_{e \in \Lambda} WeW,  \quad\mbox{disjoint unions.}
\]
\noindent
Furthermore, if $e, f \in E(\overline T)$ then $e \leq f \Leftrightarrow GeG \subseteq GfG \Leftrightarrow WeW \subseteq WfW$.

Let $W$ act naturally on $E(\overline T)$ by conjugation. The cross section lattice $\Lambda$ is a transversal of the orbits of this action. Each orbit is a conjugacy class of $E(\overline T)$ under $W$.

\subsection{Type map and $\cal J$-irreducible monoids}
The type map can be considered a monoid analogue of the Coxeter graph in Lie theory. It plays a crucial role in determining the cross section  lattice for a reductive monoid. Especially, for $\cal J$-irreducible monoids, it provides a combinatoric approach to precisely finding out the cross section  lattice. A reductive monoid $M$ with zero 0 is called $\J$-irreducible if $\Lambda\setminus\{0\}$ has a unique minimal element.

Let $V$ be a Euclidean space and let $r: W \rightarrow Gl(V)$ be the usual reflection representation of the Weyl group $W$.
Along with this goes the fundamental Weyl chamber $\mathcal C \subseteq V$ and the corresponding set of simple reflections
$S=\{s_\a \mid \a\in \Delta\}  \subseteq W$, where $\Delta$ is the set of simple roots of $G$ relative to $T$. Note that
$W$ is generated by $S$, and $\mathcal C$ is a fundamental domain for the action of $W$ on $V$. See \cite{H1, Ka} for details.

\begin{defi}  The type map
$
    \lambda: \Lambda\to 2^{\Delta}
$
is defined by
$
\lambda(e)=\{\a\in \Delta \mid s_\a e = es_\a\}.
$
\end{defi}
Let $\lambda^*(e) = \{\a\in \Delta\mid s_\a e=es_\a\not=e\}$ and $\lambda_*(e) = \{\a\in \Delta\mid s_\a e=es_\a =e\}$. Then  $\lambda(e) = \lambda^*(e) \bigsqcup \lambda_*(e)$.  We define parabolic subgroups of $W$ determined by $\lambda(e), \lambda^*(e)$ and $\lambda_*(e)$, respectively,
\[      W(e) = W_{\lambda(e)}, \qquad
        W^*(e) = W_{\lambda^*(e)},\qquad
        W_*(e) = W_{\lambda_*(e)}.
\]
Then $W(e) = \{w\in W \mid we = ew\}$ and $W_*(e) = \{w\in W \mid we = ew = e\}$. We call $W(e)$ the centralizer of $e$ in $W$ and $W_*(e)$ the stabilizer of $e$ in $W$. The following results are standard from Putcha \cite{PU1} and Renner \cite{R3}.

\begin{proposition}\label{idemlemma}
Let $e, f \in E(R)$ and $w \in W$.
\begin{enumerate}
\item If $we=f ~(\text{or } ew=f)$, then $e=f$,
\vspace {-2mm}
\item $we=e$ if and only if $ew=e$. Moreover, $W_*(e) = \{w\in W \mid we=e\}.$
\vspace {-2mm}
\item If $e\in \Lambda$, then $W_*(e)$ is a normal subgroup of $W(e) \cong W^*(e)\times W_*(e).$
\end{enumerate}
\end{proposition}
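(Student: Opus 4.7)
My plan is to exploit the inverse-semigroup structure of $R$ for the easy parts and defer to the cross section lattice theory for the rest. The key fact is that idempotents are their own inverses and that $(ab)^{-1}=b^{-1}a^{-1}$ in any inverse semigroup, which gives a quick handle on products $we$ and $ew$ with $w\in W$ and $e$ idempotent.

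For (1), if $we=f$ with $f$ idempotent, then taking inverses in $R$ gives $(we)^{-1}=ew^{-1}$ and $(we)^{-1}=f^{-1}=f$, so $ew^{-1}=f$. Then
\[
    e \;=\; e(w^{-1}w)e \;=\; (ew^{-1})(we) \;=\; f\cdot f \;=\; f,
\]
and the case $ew=f$ is handled symmetrically by computing $(ew)(ew)^{-1}$. For the first half of (2), applying the same inverse-taking trick to $we=e$ yields $ew^{-1}=e$, and right-multiplying by $w$ gives $ew=e$; the converse is parallel. For the ``moreover'' claim, the inclusion $W_*(e)\subseteq\{w\in W\mid we=e\}$ is immediate from the definition of $\lambda_*(e)$, while the reverse inclusion is the genuine content: I would invoke the standard theorem of Putcha and Renner that, for $e\in\Lambda$, the left-stabilizer of $e$ in $W$ is precisely the standard parabolic $W_{\lambda_*(e)}$. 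This is built into the parametrization of $\Lambda$ by subsets of $\Delta$ via the type map together with the disjoint decomposition $R=\bigsqcup_{e\in\Lambda}WeW$.

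For (3), the decomposition $W(e)\cong W^*(e)\times W_*(e)$ rests on showing that $s_\alpha$ and $s_\beta$ commute whenever $\alpha\in\lambda^*(e)$ and $\beta\in\lambda_*(e)$. The typical argument is to check that such $\alpha,\beta$ are non-adjacent in the Coxeter diagram, so the braid relation collapses to $s_\alpha s_\beta=s_\beta s_\alpha$; if they were joined by an edge, applying the braid relation to $e$ would force either $s_\alpha e=e$ or $s_\beta e\ne e s_\beta$, contradicting the membership of $\alpha$ in $\lambda^*$ or of $\beta$ in $\lambda_*$. Given elementwise commutativity, the subgroups $W^*(e)=W_{\lambda^*(e)}$ and $W_*(e)=W_{\lambda_*(e)}$ centralize each other and meet trivially (as parabolics generated by disjoint subsets of $S$), producing the internal direct product $W(e)=W^*(e)\times W_*(e)$, whence normality of $W_*(e)$ in $W(e)$.

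The main obstacle is the orthogonality of $\lambda^*(e)$ and $\lambda_*(e)$ needed for (3), together with the reverse inclusion in the ``moreover'' clause of (2). Both go beyond formal inverse-semigroup manipulations and require the combinatorics of the type map on the cross section lattice as worked out in the cited papers of Putcha and Renner; I would quote these results rather than reprove them.
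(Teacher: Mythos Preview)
The paper does not prove this proposition at all; it introduces it with ``The following results are standard from Putcha \cite{PU1} and Renner \cite{R3}'' and gives no argument. Your proposal therefore goes strictly beyond the paper: your inverse-semigroup computations for (1) and the first half of (2) are correct and self-contained (the identity $(we)^{-1}=ew^{-1}$ and the product $(ew^{-1})(we)=e$ do the job cleanly), whereas the paper simply absorbs these into the citation. For the ``moreover'' in (2) and for (3) you end up in the same place as the paper, deferring to the Putcha--Renner theory of the type map; your outline of why $\lambda^*(e)$ and $\lambda_*(e)$ are orthogonal in the Coxeter graph is the right heuristic but, as you note, is not a proof without the structure theory of $E(\overline T)$ inside the reductive monoid. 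One small point: your remark that the reverse inclusion in the ``moreover'' clause needs $e\in\Lambda$ is well taken, but since every $e\in E(R)$ is $W$-conjugate to some element of $\Lambda$, the general case follows by transport of structure once the $\Lambda$ case is granted.
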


Let $G_0$ be a simple algebraic group and $\rho: G_0\to GL(V)$ be an irreducible rational representation over an algebraically closed field $K$. Then
\[
M=\overline {K^*\rho(G_0)}
\]
is a $\J$-irreducible monoid, called the $\J$-irreducible monoid associated with $\rho$. Let $\mu_i$ ($1\le i \le l$) be the fundamental dominant weights of $G_0$ of type $X$, where $X = $ $A_l$, $B_l$, $C_l$, $D_l$, $E_6$, $E_7$, $E_8$, $F_4$ and $G_2$.

\begin{defi}\label{BasicMonoid}
The $\J$-irreducible monoid associated with $\mu_i$ is called the $i$-th basic monoid of type $X$; its Renner monoid is referred to as the $i$-th basic Renner monoid of type $X$.
\end{defi}

The theorem below is a summary of Corollary 4.11 and Theorem 4.16 of \cite{PR2}, and we use the bracket notation $\langle \mu, \a\rangle$ as in page 16 of \cite{R3}.
\begin{theorem}\label{recipe}
Let $M$ be the $\J$-irreducible monoid associated with a highest weight representation with high weight a dominant weight $\mu$ and let $J_0=\{\a\in \Delta \mid \langle \mu,
\a\rangle=0\}$.
Then
\begin{enumerate}
\item $\lambda^*(\Lambda\setminus
\{0\})=\{X\subseteq\Delta\mid\text{X has no connected component
that lies entirely in } J_0 \}$.

\item $\lambda_*(e)=\{\a\in J_0\setminus \lambda^*(e) \mid
s_{\a}s_{\beta}=s_{\beta}s_{\a} \mbox{ for all } \beta \in
\lambda^*(e)\}, \mbox{ for } e\in \Lambda\setminus\{0\}$.
\end{enumerate}
\end{theorem}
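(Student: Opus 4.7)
Since Theorem \ref{recipe} is attributed to Putcha-Renner, the plan is to recover it from the representation-theoretic structure of $M=\overline{K^*\rho(G_0)}$. Write $V=\bigoplus_\nu V_\nu$ for the weight space decomposition of $\rho$ under a maximal torus $T_0\subseteq G_0$. Then $\overline T$ is contained in the diagonal operators on this decomposition, and idempotents $e\in E(\overline T)$ correspond bijectively to sets $S$ of weights appearing in $V$, with $e$ the projection onto $\bigoplus_{\nu\in S}V_\nu$. I would first identify $\Lambda$ with the idempotents whose weight set $S$ is an upper ideal in the dominance order on the weights of $V$; the uniqueness of the minimal element $\mu_e$ of $S$ (with $\mu_e=\mu$ for the smallest non-zero $e\in\Lambda$) follows from the $\J$-irreducibility assumption.

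With this setup the type map is transparent: for $\alpha\in\Delta$ the reflection $s_\alpha$ either fixes every weight of $S$ pointwise (placing $\alpha$ in $\lambda_*(e)$), permutes $S$ setwise but not pointwise (placing $\alpha$ in $\lambda^*(e)$), or moves $S$ outside itself (so $\alpha\notin\lambda(e)$). A direct weight analysis shows $\lambda_*(e)\subseteq J_0$, because any $\alpha\notin J_0$ moves $\mu_e$ off its own weight line. Combined with Proposition \ref{idemlemma}(3), which gives $W(e)\cong W^*(e)\times W_*(e)$, this direct product forces generators to commute, yielding the commutation condition in Part (2); conversely, the same weight analysis shows every $\alpha\in J_0\setminus\lambda^*(e)$ commuting with $\lambda^*(e)$ lies in $\lambda_*(e)$.

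For Part (1), one direction is clean: if $X=\lambda^*(e)$ had a connected component $C\subseteq J_0$, then the reflections $\{s_\alpha:\alpha\in C\}$ would fix $\mu$, and by tracing their action on $S$ along paths in the Dynkin diagram of $C$ one would deduce $s_\alpha e=e$ for every $\alpha\in C$, contradicting $C\subseteq\lambda^*(e)$. The converse, constructing for each admissible $X$ an idempotent $e_X\in\Lambda$ with $\lambda^*(e_X)=X$, is the main obstacle: it requires exhibiting a sub-dominant weight $\mu_X$ of $V$ whose parabolic stabilizer in $W$ is generated by $J_0$ together with the simple reflections orthogonal to $X$, then forming $e_X$ as the projection onto the $W$-saturation of $\mu_X$ and verifying $Be_X=e_XBe_X$ via the Bruhat decomposition. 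The technical heart lies in showing that such $\mu_X$ genuinely appears in $V$, a non-trivial weight-theoretic fact which is exactly the content of Corollary 4.11 and Theorem 4.16 of \cite{PR2}.
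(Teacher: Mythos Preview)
The paper does not prove Theorem \ref{recipe}; it is stated there purely as a citation, introduced with ``The theorem below is a summary of Corollary 4.11 and Theorem 4.16 of \cite{PR2}.'' So there is no argument in the paper to compare your proposal against. Your outline is consistent with this treatment: you sketch the weight-space description of $E(\overline T)$ and the type map, handle the easy direction of each part, and then explicitly defer the constructive step (producing $e_X\in\Lambda$ with $\lambda^*(e_X)=X$) to exactly the same references the paper cites. In that sense your proposal and the paper agree: both ultimately rely on \cite{PR2} for the substance.

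One caution on the sketch itself: the claim that $\Lambda$ is identified with idempotents whose weight set is an upper ideal in the dominance order is not quite right as stated. The cross section lattice is characterized by the condition $Be=eBe$, which in weight terms means the weight set $S$ is saturated upward with respect to the \emph{positive-root} partial order (adding positive roots stays in $S$), and the minimal weights of $S$ form a single $W_{\lambda(e)}$-orbit rather than there being a unique minimal weight $\mu_e$. This does not affect your overall plan, since you ultimately invoke \cite{PR2} anyway, but if you intend the sketch to stand on its own you would need to correct this description.
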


\section{The $\sim$-conjugacy classes in Renner monoids}

Two elements $\sigma, \tau$ in a Renner monoid $R$ are conjugate, denoted by $\sigma\sim \tau$, if $\tau=w\sigma w^{-1}$ for some $w\in W$. Denote by  $W/W_*(e)$ the set of left cosets of $W_*(e)$ in $W$, and let
\[
D_*(e)=\{w\in W \mid l(ws_{\a})=l(w)+1 \mbox{ for all } \a\in
\lambda_*(e)\}.
\]
Then $D_*(e)$ is a set of left coset representatives of
$W/W_*(e)$, and each $w\in D_*(e)$ has a minimal length in $wW_*(e)$. If $\lambda_*(e) = \emptyset$, then $W_*(e) = 1$ and $D_*(e) = W$.

\begin{lemma} Each element in a Renner monoid $R$ is conjugate to
an element in $\{we \mid w\in D_*(e)\}$ for some $e\in \Lambda.$
\end{lemma}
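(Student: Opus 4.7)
The plan is to exploit the decomposition $R=\bigsqcup_{e\in\Lambda} WeW$ (disjoint union), recalled in Section~2.2, so that any $\sigma\in R$ lies in a unique double coset $WeW$ and therefore can be written as $\sigma=uev$ with $u,v\in W$ and $e\in\Lambda$.

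First, I would conjugate away the right $W$-factor. Setting $g=v$, one computes $g\sigma g^{-1}=v(uev)v^{-1}=(vu)e$, so $\sigma\sim w'e$ where $w'=vu\in W$. This reduces the problem to showing that any element of the form $w'e$ with $w'\in W$ is itself conjugate (or even equal) to an element $we$ with $w\in D_*(e)$.

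Next, since $D_*(e)$ is a set of left coset representatives of $W/W_*(e)$, I can factor $w'=ws$ with $w\in D_*(e)$ and $s\in W_*(e)$. By Proposition~\ref{idemlemma}(2), $W_*(e)=\{x\in W\mid xe=e\}$, so $se=e$. Consequently
\[
w'e=wse=we,
\]
and we obtain $\sigma\sim we$ with $w\in D_*(e)$, which is exactly what we want.

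There is essentially no obstacle here: the argument is a direct bookkeeping exercise combining the Bruhat-style decomposition of $R$ with the defining property of the stabilizer $W_*(e)$ and the minimal-length coset representatives $D_*(e)$. The only thing to be careful about is the direction of conjugation in the first step; once that is fixed, the rest is automatic.
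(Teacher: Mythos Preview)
Your proof is correct and follows essentially the same approach as the paper's own proof: write $\sigma=uev$, conjugate by $v$ to obtain $(vu)e$, and then replace $vu$ by its minimal-length coset representative in $W/W_*(e)$ using $se=e$ for $s\in W_*(e)$. The only differences are cosmetic (notation and the explicit factorization $w'=ws$ versus the paper's direct appeal to the coset representative).
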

\begin{proof}
    Let $r$ be an element in $R$. Then $r=uev$, where $u,v\in W$ and $e\in \Lambda$.
    So $r=v^{-1}vuev$. Thus, $r$ is conjugate to $vue$.  Let element $w\in D_*(e)$ be
    the left coset representative of the coset $vuW_*(e)$. Then $we = vue,$ and hence
    $r$ is conjugate to $we$.  $\hfill\Box$
\end{proof}

Let
$
    We = \{we \mid w\in W \}
$
for $e\in \Lambda$. The above lemma leads to the following corollary.

\begin{corollary} Each element in a Renner monoid is conjugate to
an element in $We$ for some $e\in \Lambda.$
\end{corollary}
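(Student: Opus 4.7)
The plan is to deduce this corollary immediately from Lemma 3.1. Since $D_*(e)$ is by definition a subset of $W$ (namely the set of minimal-length left coset representatives of $W_*(e)$ in $W$), the set $\{we \mid w \in D_*(e)\}$ is a subset of $We = \{we \mid w \in W\}$. Therefore any element conjugate to some $we$ with $w \in D_*(e)$ is, a fortiori, conjugate to an element of $We$ for the same $e \in \Lambda$.

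So the proof reduces to a single line: given $r \in R$, apply Lemma 3.1 to obtain $e \in \Lambda$ and $w \in D_*(e) \subseteq W$ with $r \sim we$, and observe that $we \in We$. There is no real obstacle here; the corollary is simply a weakening of the lemma that drops the refinement to minimal-length coset representatives. The only thing worth noting is that the choice of $e \in \Lambda$ is the same one supplied by the lemma, so no additional argument about the cross section lattice is required.
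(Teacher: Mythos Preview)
Your proposal is correct and matches the paper's approach exactly: the paper simply states that ``the above lemma leads to the following corollary,'' and your argument---that $D_*(e)\subseteq W$ implies $\{we \mid w\in D_*(e)\}\subseteq We$---is precisely the one-line deduction intended.
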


\begin{lemma} No element of $We_1$ is conjugate to an element of $We_2$ for different idempotents $e_1, e_2\in \Lambda.$
\end{lemma}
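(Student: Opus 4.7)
The plan is to use the disjoint decomposition $R = \bigsqcup_{e \in \Lambda} WeW$ recalled in Section 2.2. Since conjugation by an element of $W$ is inner multiplication by both $W$ and its inverse, it preserves each two-sided piece $WeW$, which immediately contradicts any $W$-conjugacy between elements sitting in pieces indexed by different $e \in \Lambda$.

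More concretely, suppose for contradiction that some $ue_1 \in We_1$ is conjugate to some $ve_2 \in We_2$ with $e_1, e_2 \in \Lambda$ distinct. Then there exists $w \in W$ with
\[
   w(ue_1)w^{-1} = ve_2.
\]
The left-hand side belongs to $We_1W$ because $wu \in W$ and $w^{-1} \in W$; the right-hand side belongs to $We_2W$ (taking $1 \in W$ on the right). Hence $We_1W \cap We_2W \ne \emptyset$, and by the disjointness of $R = \bigsqcup_{e \in \Lambda} WeW$ this forces $We_1W = We_2W$, and therefore $e_1 = e_2$ since $\Lambda$ indexes the pieces, a contradiction.

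The only thing to be careful about is checking that the two sides genuinely lie in the advertised double cosets; this is simply the observation that $We$ and $eW$ are subsets of $WeW$ for every $e \in \Lambda$. I expect no real obstacle: the statement is essentially a restatement of the fact recorded earlier that the idempotents of $\Lambda$ parameterize the $W\times W$-orbits in $R$, once one notes that $W$-conjugation is a special case of $W \times W$-action.
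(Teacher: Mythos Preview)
Your argument is correct. You exploit the disjoint decomposition $R=\bigsqcup_{e\in\Lambda}WeW$ stated in Section~2.2: since $w(ue_1)w^{-1}=(wu)e_1w^{-1}\in We_1W$ and $ve_2\in We_2W$, disjointness forces $e_1=e_2$. This is a clean and entirely legitimate route.

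The paper takes a slightly different path. Rather than appealing to the $W\times W$-decomposition, it rewrites $wue_1w^{-1}=ve_2$ as $(v^{-1}wuw^{-1})(we_1w^{-1})=e_2$, notes that $we_1w^{-1}$ is an idempotent, and invokes Proposition~\ref{idemlemma}(1) (if $wf=g$ for idempotents $f,g$ and $w\in W$, then $f=g$) to conclude $we_1w^{-1}=e_2$; then the fact that $\Lambda$ is a transversal for the $W$-conjugacy classes of idempotents gives $e_1=e_2$. Your argument is arguably more direct for the statement at hand. The paper's argument, however, isolates the intermediate conclusion $we_1w^{-1}=e_2$ and $v^{-1}wuw^{-1}\in W_*(e)$, which is exactly the computation reused in the proof of Theorem~\ref{conjugaceTheoem}; so its approach is tailored to feed the next result rather than to minimize the proof of this lemma.
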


\begin{proof} Assume that two elements $ue_1,ve_2 ~(u,v\in W \mbox{ and } e_1 \ne e_2)$ are conjugate. Then there exists an element $w\in W$ such that
 \[wue_1w^{-1}= ve_2.\]
Thus $(v^{-1}wuw^{-1})we_1w^{-1}=e_2$.  It follows from Proposition \ref{idemlemma} that $we_1w^{-1}=e_2$.  Since the cross-section lattice is a transversal of the conjugacy classes of $E(\overline T)$ under $W$, we have $e_1=e_2$, which contradicts the assumption. $\hfill\Box$
\end{proof}

Define a group action of $W(e)$ on $W/W_*(e)$ by
\[
    w\cdot uW_*(e) = wuw^{-1}W_*(e),
\]
where $w\in W(e)$ and $u\in W$. This action is well defined since $W_*(e)$ is a normal subgroup of $W(e)$.

\begin{theorem}\label{conjugaceTheoem} Let $e\in \Lambda$ and $W/W_*(e)$ be the set of left cosets of $W_*(e)$ in $W$. Two elements $ue, ve$ in $We$ are conjugate if and only if the two cosets $uW_*(e)$ and $vW_*(e)$ are in the same $W(e)$-orbit in $W/W_*(e)$.
\end{theorem}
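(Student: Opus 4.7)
The plan is to prove both directions of the equivalence by analyzing the conjugacy equation $wuew^{-1}=ve$ in the inverse monoid $R$ and extracting two pieces of information from it: a constraint forcing $w$ to centralize $e$, and a coset condition relating $u$ to $v$. The structural ingredients will be the three parts of Proposition~\ref{idemlemma}, together with the fact (Proposition~\ref{idemlemma}(3)) that $W_*(e)$ is normal in $W(e)$, which makes the conjugation action of $W(e)$ on $W/W_*(e)$ well defined in the first place.

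For the forward direction, suppose some $w\in W$ satisfies $wuew^{-1}=ve$. I set $f=wew^{-1}\in E(\overline T)$ and use the rearrangement $ew^{-1}=w^{-1}f$ (obtained from $wew^{-1}=f$ by multiplying by $w$ on the right and $w^{-1}$ on the left) to rewrite $wuew^{-1}=(wuw^{-1})f$. Comparing with $ve$ yields $f = w'e$ where $w'=(wuw^{-1})^{-1}v\in W$, so Proposition~\ref{idemlemma}(1) forces $f=e$; that is, $w\in W(e)$. With $w$ centralizing $e$, the original equation collapses to $(wuw^{-1})e=ve$, and Proposition~\ref{idemlemma}(2) translates this directly into $(wuw^{-1})^{-1}v\in W_*(e)$, i.e.\ $wuw^{-1}W_*(e)=vW_*(e)$. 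This is exactly the statement that $w\cdot uW_*(e)=vW_*(e)$ under the action defined just before the theorem.

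For the backward direction, given $w\in W(e)$ with $w\cdot uW_*(e)=vW_*(e)$, I write $wuw^{-1}=vt$ for some $t\in W_*(e)$. Since $w\in W(e)$ yields $ew^{-1}=w^{-1}e$ and $t\in W_*(e)$ yields $te=e$, the computation
$w(ue)w^{-1}=wu(ew^{-1})=(wuw^{-1})e=vte=ve$
is immediate, establishing $ue\sim ve$.

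The main obstacle is the extraction of $w\in W(e)$ in the forward direction: everything before and after that step is clean algebraic manipulation, but the step itself relies crucially on Proposition~\ref{idemlemma}(1), which says that $e$ is the unique idempotent that can appear on the right of any Weyl translate of itself. This is the same mechanism used in Lemma~3.3 to separate conjugacy classes across different $\Lambda$-orbits; here it is used one level finer, to pin the conjugator inside $W(e)$ rather than merely inside $W$. Once this point is settled, the remainder reduces to standard bookkeeping in $W/W_*(e)$, and the normality in Proposition~\ref{idemlemma}(3) ensures that no hidden well-definedness issues remain.
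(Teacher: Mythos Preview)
Your proof is correct and follows essentially the same approach as the paper's own argument. The paper rewrites $wuew^{-1}=ve$ as $(v^{-1}wuw^{-1})(wew^{-1})=e$ and applies Proposition~\ref{idemlemma}(1) directly, whereas you introduce $f=wew^{-1}$ and obtain $f=(wuw^{-1})^{-1}ve$ before invoking the same proposition; these are minor algebraic rearrangements of the same idea, and the backward direction is likewise identical to what the paper means by ``the argument can be reversed.''
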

\begin{proof}
    Let $u,v\in W$. If there exists $w\in W$ such that $wuew^{-1} =ve$, then
    \[
        (v^{-1}wuw^{-1})(wew^{-1}) =e.
    \]
It follows from Proposition \ref{idemlemma} that $wew^{-1}=e$ and $v^{-1}wuw^{-1}\in
W_*(e)$. Therefore, $w\in W(e)$ and $wuw^{-1} W_*(e) = v W_*(e)$, that is, $uW_*(e)$ and $vW_*(e)$ are in the same $W(e)$-orbit in $W/W_*(e).$ The argument can be reversed.  $\hfill\Box$
\end{proof}

The following results are corollaries of Theorem \ref{conjugaceTheoem}.

\begin{corollary}
There is a one-to-one correspondence between the conjugacy classes of a Renner monoid and the orbits of the above action of $W(e)$ on $W/W_*(e)$ where $e$ runs through $\Lambda$.
\end{corollary}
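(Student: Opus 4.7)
The proof is essentially a book-keeping argument that glues together the three preceding results, so the plan is to make the bijection explicit and then verify both directions.

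First I would define the correspondence. Given a conjugacy class $C \subseteq R$, Corollary 3.2 guarantees an idempotent $e \in \Lambda$ such that $C \cap We \neq \emptyset$, and Lemma 3.3 ensures that this $e$ is uniquely determined by $C$ (since elements of $We_1$ and $We_2$ are never conjugate for $e_1 \neq e_2$). Pick any $ue \in C \cap We$ and assign to $C$ the pair $(e, \mathcal{O})$, where $\mathcal{O}$ is the $W(e)$-orbit of the coset $uW_*(e)$ in $W/W_*(e)$. The well-definedness of $\mathcal{O}$, independent of the choice of representative $ue$, follows directly from Theorem 3.4: any two elements of $C \cap We$ are $\sim$-conjugate, hence their cosets lie in a common $W(e)$-orbit.

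Next I would verify that this map is a bijection onto the disjoint union $\bigsqcup_{e \in \Lambda} \bigl(W(e)\backslash (W/W_*(e))\bigr)$. For injectivity, suppose two conjugacy classes $C$ and $C'$ are sent to the same pair $(e, \mathcal{O})$. Pick $ue \in C$ and $ve \in C'$ with $uW_*(e), vW_*(e) \in \mathcal{O}$. Theorem 3.4 then yields $ue \sim ve$, so $C = C'$. For surjectivity, given any $e \in \Lambda$ and any $W(e)$-orbit $\mathcal{O}$ in $W/W_*(e)$, choose $u \in W$ with $uW_*(e) \in \mathcal{O}$; then the $\sim$-conjugacy class of $ue$ is sent to $(e, \mathcal{O})$ by construction.

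The bulk of the argument is already contained in Theorem 3.4 and Lemma 3.3, so the only genuine issue is confirming that the assignment $C \mapsto (e, \mathcal{O})$ does not depend on the choice of representative in $C \cap We$ and that $e$ is uniquely pinned down by $C$. Both are immediate from the earlier results, so I do not expect any serious obstacle; the statement should follow in a few lines once the map is written down.
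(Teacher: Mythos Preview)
Your proposal is correct and follows exactly the route the paper intends: the paper states this result without a written proof, treating it as an immediate consequence of Theorem~3.4 together with Corollary~3.2 and Lemma~3.3. You have simply spelled out the bijection and its verification, which is precisely the content that the paper leaves implicit.
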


\begin{corollary} Let $n_e$ be the number of $W(e)$-orbits in $W/W_*(e)$. Then the number of the conjugacy classes in a Renner
monoid is
$
    \sum_{e\in \Lambda} n_e.
$
\end{corollary}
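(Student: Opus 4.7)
The plan is to derive this counting formula as a direct consequence of the bijection established in Corollary 3.5, together with the partition already assembled in the preceding lemmas. The argument is essentially bookkeeping, so I would keep it short.

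First I would recall the structural picture assembled so far. By Corollary 3.2, every element of $R$ is conjugate to some element of $We$ for at least one $e\in\Lambda$. By Lemma 3.3, elements of $We_1$ and $We_2$ cannot be conjugate whenever $e_1\neq e_2$ in $\Lambda$. Consequently, each conjugacy class of $R$ meets $We$ for exactly one $e\in\Lambda$, and the set of conjugacy classes of $R$ partitions as
\[
    \mathrm{Conj}(R) \;=\; \bigsqcup_{e\in\Lambda} \mathrm{Conj}_e(R),
\]
where $\mathrm{Conj}_e(R)$ denotes those conjugacy classes meeting $We$.

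Next I would fix $e\in\Lambda$ and apply Theorem 3.4: the map $ue \mapsto uW_*(e)$ induces a well-defined bijection between $\mathrm{Conj}_e(R)$ and the set of $W(e)$-orbits on $W/W_*(e)$. This bijection is precisely the content of Corollary 3.5 on the $e$-level. Since by definition $n_e$ is the number of such orbits, we get $|\mathrm{Conj}_e(R)| = n_e$.

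Summing over $\Lambda$ yields
\[
    |\mathrm{Conj}(R)| \;=\; \sum_{e\in\Lambda} |\mathrm{Conj}_e(R)| \;=\; \sum_{e\in\Lambda} n_e,
\]
which is the claimed formula. There is no real obstacle here: the entire content has already been packaged in Corollary 3.5, and all that remains is to observe that the disjoint union of the orbit sets is indexed by $\Lambda$ and to take cardinalities.
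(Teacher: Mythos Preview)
Your proposal is correct and matches the paper's approach exactly: the paper states this corollary without proof, as an immediate consequence of Theorem~3.4 (and the partition coming from Corollary~3.2 and Lemma~3.3), which is precisely the bookkeeping you spell out.
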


\noindent{\it{\bf Example 3.7}}
Let $M$ be the $\cal J$-irreducible algebraic monoids associated with an irreducible representation $\rho$ of a simple algebraic group of type $A_2$, $B_2$ and $G_2$, respectively,
such that $\langle \rho, \a_1 \rangle \ne 0$ and $\langle \rho, \a_2 \rangle \ne 0$. Then $M$ is a canonical monoid with $J_0 = \{\a\in \Delta \mid \langle \rho, \a \rangle = 0 \} = \emptyset$. Note that $\Delta = \{\a_1, \a_2\}$ and $S = \{s_{\a_1}, s_{\a_2}\}$. Write $s_1=s_{\a_1}$ and $s_2=s_{\a_2}$, once and forever. By Theorem \ref{recipe},
\[
    \Lambda\setminus\{0\} \cong \{\emptyset, ~\{\a_1\}, ~\{\a_2\}, ~\{\a_1, \a_2\}\}.
\]
So $\Lambda = \{0, ~e_0, ~e_1, ~e_2, ~1\}$ with $e_0$ the minimal nonzero idempotent. The conjugacy classes of the Renner monoid of $M$ is summarized as follows.

\vspace{1mm}
If $X = A_2,$ then $R$ has 18 conjugacy classes with representatives
\[
\begin{aligned}
\{ &~0,  ~e_0, ~s_1e_0, ~s_2e_0, ~s_1s_2e_0, ~s_2s_1e_0, ~s_1s_2s_1e_0, ~e_1, \\
   &~s_1e_1, ~s_2e_1, ~s_1s_2e_1, ~e_2, ~s_1e_2, ~s_2e_2, ~s_1s_2e_2, ~1, ~s_1, ~s_1s_2 ~\}.
\end{aligned}
\]

\vspace{1mm}
If $X = B_2,$ then $R$ has 26 conjugacy classes with representatives
\[
\begin{aligned}
\{ &~0,  ~e_0, ~s_1e_0, ~s_2e_0, ~s_1s_2e_0, ~s_2s_1e_0, ~s_1s_2s_1e_0, ~s_2s_1s_2e_0, ~-e_0, \\
   &~e_1, ~s_1e_1, ~s_2e_1, ~s_1s_2e_1, ~s_2s_1s_2e_1, ~-e_1, ~e_2, ~s_1e_2, ~s_2e_2, \\
   &~s_1s_2e_2, ~s_1s_2s_1e_2, ~-e_2, ~1, ~s_1, ~s_2,~s_1s_2, ~-1 ~\}.
\end{aligned}
\]

\vspace{1mm}
If $X = G_2,$ then $R$ has 35 conjugacy classes with representatives
\[
\begin{aligned}
\{&~0, ~e_0, ~s_1e_0, ~s_2e_0, ~s_1s_2e_0, ~(s_1s_2)^2e_0,  ~-e_0, ~(s_2s_1)^2e_0, ~s_2s_1e_0, ~s_2s_1s_2e_0, \\
&~(s_2s_1)^2s_2e_0, ~(s_1s_2)^2s_1e_0, ~s_1s_2s_1e_0, ~e_1, ~s_1e_1, ~s_2e_1, ~s_1s_2e_1, ~(s_1s_2)^2e_1,  \\
&~-e_1, ~s_2s_1s_2e_1, ~-s_1e_1, ~e_2, ~s_1e_2, ~s_2e_2, ~s_1s_2e_2, ~(s_1s_2)^2e_2, ~-e_2, \\
&~s_1s_2s_1e_2, ~-s_2e_2,  ~1, ~s_1, ~s_2, ~s_2s_1, ~(s_2s_1)^2, ~-1 ~\}.
\end{aligned}
\]

We show a detailed calculation for case $G_2$; the other two cases are similar. The Weyl group $W$ of type $G_2$ is isomorphic to the dihedral group of order 12 generated by two elements $s_1s_2$ and $s_2$, where $s_1$ and $s_2$ are the simple reflections such that $(s_1s_2)^6 = s_2^2 = 1$ and $s_2(s_1s_2)s_2^{-1} = (s_1s_2)^{-1}$. The elements of $W$ are $(s_1s_2)^k, ~(s_1s_2)^ks_2,$ for $0\le k \le 5,$ and we have the relation $s_2(s_1s_2)^k = (s_1s_2)^{-k}s_2$ and $(s_1s_2)^3 = -1$. Thus
\[
    W = \{1, ~s_1s_2, ~(s_1s_2)^2, ~-1, ~(s_1s_2)^4, ~(s_1s_2)^5, ~s_2, s_1, ~(s_1s_2)s_1, ~(s_1s_2)^2s_1, ~-s_1, ~s_2s_1s_2 \}.
\]

If $e=0$, it determines the conjugacy class consisting of $0$ only.

If $e=e_0$, by Theorem \ref{conjugaceTheoem} $D_*(e_0)$ contributes 12 classes with representatives $ue$ for $u\in W$. Indeed, $W(e) = W_*(e) = 1$ and $D_*(e) = W$. Each element of $D_*(e)$ gives rise to a conjugacy class containing a single element.

If $e=e_1$, it follows from Theorem \ref{conjugaceTheoem} that $D_*(e_1)$ gives rise to 8 conjugacy classes with representatives:
$e_1$, $~s_1e_1$, $~s_2e_1$, $~s_1s_2e_1$, $~(s_1s_2)^2e_1$, $-e_1$, $~s_2s_1s_2e_1$, $~-s_1e_1.$ In fact, in this case, $W(e) = \{1, s_1\}$, $W_*(e) = \emptyset$, and $D_*(e) = W \cong W/W_*(e)$. Notice that each of $1, ~s_1, (s_1s_2)^3, -s_1$ is in an orbit consisting of a single element. Also, $s_1s_2$ and $s_2s_1$ are in one orbit; $(s_1s_2)^2$ and $(s_1s_2)^4=(s_2s_1)^2$ are in a orbit; $s_2$ and $s_1s_2s_1$ are in the same orbit; $s_2s_1s_2$ and $(s_1s_2)^2s_1$ are in one orbit.

If $e=e_2$, then by symmetry of the Dynkin diagram of $G_2$ we see that $D_*(e_2)$ provides 8 conjugacy classes with representatives:
$e_2$, $~s_1e_2$, $~s_2e_2$, $~s_1s_2e_2$, $~(s_1s_2)^2e_2$, $-e_2$, $~s_1s_2s_1e_2$, $~-s_2e_2$.

If $e=1$, then $W_*(e) = 1$ and $D_*(e) = W(e) = W$, which is isomorphic to the dihedral group with 12 elements. So $W$ has 6 conjugacy classes (c.f. \cite{J}, Chapter 5) with representatives:
$1,$ $s_1,$ $s_2,$ $s_2s_1,$ $(s_2s_1)^2,$ $-1$. \hfill $\Box$

\vspace{2mm}
\noindent{\it{\bf Example 3.8}} We now let $R$ be the first basic Renner monoid of type $X$, where $X = A_2, B_2$ or $G_2$. Then $\Delta = \{\a_1, \a_2\}$ and $S = \{s_1, s_2\}$. By Theorem \ref{recipe},
\[
    \Lambda\setminus\{0\} = \{\emptyset, ~\{\a_1\}, ~\{\a_1, \a_2\}\}.
\]
So $\Lambda = \{0, ~e_0, ~e_1, ~1\}$ with $e_0$ the minimal nonzero idempotent. We list the following results without showing the details.

If $X = A_2,$ then $R$ has 10 conjugacy classes with representatives

\centerline{
$
\{~0,  ~e_0, ~s_1e_0, ~e_1, ~s_1e_1, ~s_2e_1, ~s_1s_2e_1, ~1, ~s_1, ~s_1s_2
~\}.
$
}

\vspace{1mm}
If $X = B_2,$ then $R$ has 15 conjugacy classes with representatives

$
\{~0, ~e_0, ~s_1e_0, ~-e_0, ~e_1, ~s_1e_1, ~s_2e_1, ~s_1s_2e_1, ~s_2s_1s_2e_1, ~-e_1, ~1, ~s_1, ~s_2,~s_1s_2, ~-1~\}.
$

\vspace{1mm}
If $X = G_2,$ then $R$ has 19 conjugacy classes with representatives
\[
\begin{aligned}
\{&~0, ~e_0, ~s_1e_0, ~-s_1e_0, ~-s_2e_0,  ~e_1, ~s_1e_1, ~s_2e_1, ~s_1s_2e_1, ~(s_1s_2)^2e_1, \\
~&~-e_1, ~s_2s_1s_2e_1, ~-s_1e_1, ~1, ~s_1, ~s_2, ~s_1s_2, ~(s_1s_2)^2, ~-1 ~\}.
\end{aligned}
\]


\section{Munn conjugacy and its connections to representations}
We plan to generalize the Munn conjugacy on rook monoids $R_m$ to any Renner monoid $R$ and then investigate the connection of this conjugacy with representations of $R$.

\subsection{Embedding a Renner monoid into a rook monoid}
Closely connected with a reductive monoid $M$ is a polytope $P$ (\cite{PU1, R3}). For simplicity, we call $P$ the
polytope associated with the Renner monoid $R$ of $M$, if there is no confusion. Denote by $V(P) = \{1, ..., m\}$
the set of all vertices of $P$. A subset $K \subseteq V(P)$ is regarded as a face of the polytope if it is the set
of vertices of a face of $P$. Let $\F(P)$ be the face lattice of $P$. For any $K \in \F(P)$, define
\begin{equation}\label{edef}
    e_K (i) = \left\{ \begin{array}{ll}
                        i                       &\text{\quad if  $i\in K,$}\\
                        \text{undefined}        &\text{\quad if  $i\notin K$}.
                       \end{array} \right.
\end{equation}
By convention, $e_\emptyset = 0$, and $e_{V(P)}$ is the identity map. It follows from \cite{PU1} that
$$
    E(R) = \{e_K \mid K\in \F(P)\}.
$$

The conjugation action of $W$ on $E(R)$ induces an action of $W$ on $\F(P)$ as follows. For $w\in W$ and $J, K \in \F(P)$,
\begin{eqnarray}\label{action}
    wJ = K, \quad \mbox{ if \quad } we_Jw^{-1}  = e_K.
\end{eqnarray}
If $e\in E(R)$ and $e=e_L$, let $\mathcal F(e) = \{wL \mid w \in W\}$ be the orbit of $L$ under $W$.

Lemma 3.2 of \cite{LLC} shows that, for any $\sigma \in R$, there exist $w, w_1 \in W$ and unique faces $I, J$ of $P$ such that $\sigma = e_Jw = w_1e_I$. For any $i\in I$, let $\sigma(i)$ denote the image of $i$ under $\sigma$.  The product of $\tau, \sigma \in R$ is regarded as $\tau\sigma(i) = \tau(\sigma(i))$ if $i\in I(\sigma)$ and $\sigma(i)\in I(\tau)$. Then
\begin{eqnarray}\label{sigmaJK}
    \sigma = e_J w e_I
\end{eqnarray}
is a map of $I$ onto $J$. Thus $R$ is a submonoid of $R_m$.

\begin{defi}
 The face $I$ is called the domain of $\sigma$ and will be denoted by $I(\sigma)$; the face $J$ is called the range of $\sigma$ and will be denoted by $J(\sigma)$.
\end{defi}
\noindent Clearly if $w\in W$ then $I(w) = J(w) = V(P)$.

\subsection{Munn conjugacy}
To obtain the desired result that the number of conjugacy classes of any Renner monoid $R$ equals the number of its inequivalent irreducible representations, we develop an analogue of the Munn conjugacy relation for Renner monoids. For $\sigma\in R$, let $I^\circ(\sigma)$ be the set of $i\in I(\sigma)$ such that $\sigma^k(i)$ is defined for all $k\ge 1$, that is,
\[
    I^\circ(\sigma) = \bigcap_{k = 0}^{\infty}{I(\sigma^k)}.
\]
Then $I^\circ(\sigma) \subseteq I(\sigma)$ and $\sigma$ fixes $I^\circ(\sigma)$. Let
$$
    \sigma^\circ = \sigma|_{_{I^\circ(\sigma)}},
$$
the restriction of $\sigma$ on $I^\circ(\sigma)$. For example, if the Renner monoid is the rook monoid $R_6$ and $\sigma: 1 \mapsto 5 \mapsto 6 \mapsto 1$ and $2\mapsto 4$ with $\sigma(3), \sigma(4)$ undefined, then $I^\circ(\sigma) = \{1, 5, 6\}$ is a subset of the domain $\{1, 2, 5, 6\}$ of $\sigma$, and $\sigma^\circ : 1\mapsto 5 \mapsto 6 \mapsto 1$.

\begin{lemma}
Let $\sigma\in R$. Then $\sigma^\circ$ is an element of $R$.
\end{lemma}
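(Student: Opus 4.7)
The plan is to identify $\sigma^\circ$ explicitly with the invertible part $\sigma e_\sigma$, which is guaranteed to lie in $R$ by the epigroup structure of $R$ discussed in Section 2.

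First, I would observe that the chain of domains $I(\sigma) \supseteq I(\sigma^2) \supseteq \cdots$ is decreasing: whenever $\sigma^{k+1}(i)$ is defined, so is $\sigma^k(i)$, since $\sigma^{k+1}(i) = \sigma(\sigma^k(i))$ forces $\sigma^k(i)$ to be defined first. Because $V(P)$ is finite, the chain stabilizes, so there exists $N$ with $I^\circ(\sigma) = I(\sigma^n)$ for all $n \geq N$.

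Second, I would invoke the epigroup property of $R$ to obtain a face interpretation of $I^\circ(\sigma)$. Since $R$ is finite, hence an epigroup, there exists $k$ such that $\sigma^k$ lies in a maximal subgroup $H_{\sigma^k}$ of $R$ with identity $e_\sigma$. Write $e_\sigma = e_L$ for the corresponding face $L \in \F(P)$. For $n$ a large multiple of $k$, $\sigma^n \in H_{\sigma^n}$, so
\[
    e_L \sigma^n = \sigma^n e_L = \sigma^n, \qquad \tau \sigma^n = \sigma^n \tau = e_L,
\]
where $\tau$ is the inverse of $\sigma^n$ in this subgroup. The relation $\sigma^n e_L = \sigma^n$ forces $I(\sigma^n) \subseteq L$, since $\sigma^n(i)$ is undefined whenever $e_L(i)$ is undefined. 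Conversely, since $e_L = \tau\sigma^n$ has domain $L$, every $i \in L$ satisfies $\sigma^n(i)$ defined, i.e. $L \subseteq I(\sigma^n)$. Thus $I(\sigma^n) = L$ for all sufficiently large $n$, and combined with the first step this gives $I^\circ(\sigma) = L$.

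Third, I would check that $\sigma^\circ = \sigma e_L$ as maps, which concludes the proof. For $i \in L$, we have $\sigma e_L(i) = \sigma(i) = \sigma^\circ(i)$; for $i \notin L = I^\circ(\sigma)$, both $\sigma e_L(i)$ and $\sigma^\circ(i)$ are undefined. Since $\sigma e_L = \sigma e_\sigma$ is precisely the invertible part of $\sigma$, which lies in $R$ by the discussion following \cite{Ku}, we conclude $\sigma^\circ \in R$. The only genuinely delicate point is the bookkeeping that $I(\sigma^n)$ coincides with the face $L$ supporting $e_\sigma$; once that identification is made the rest is immediate.
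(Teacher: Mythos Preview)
Your proof is correct, but it takes a different route from the paper's own argument. The paper proceeds more directly: it observes that the decreasing chain $I(\sigma)\supseteq I(\sigma^2)\supseteq\cdots$ stabilizes, so $I^\circ(\sigma)=I(\sigma^k)=J(\sigma^k)$ for some $k$; since $\sigma^k\in R$, this set is automatically a face of $P$, hence $e_{I^\circ(\sigma)}\in E(R)$. Writing $\sigma=we_I$ with $w\in W$, one then has $\sigma^\circ=\sigma|_{I^\circ(\sigma)}=we_{I^\circ(\sigma)}\in R$. No appeal to the epigroup structure or to $e_\sigma$ is needed.

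Your approach instead identifies $\sigma^\circ$ with the invertible part $\sigma e_\sigma$ by showing $I^\circ(\sigma)$ equals the face $L$ underlying $e_\sigma$. This is a genuinely different argument: it is slightly longer and imports more machinery (the group-bound idempotent $e_\sigma$), but it has the payoff of establishing the identity $\sigma^\circ=\sigma e_\sigma$ along the way. The paper proves exactly this identity later, in the proof of Theorem~\ref{munnEqualSconjugacy}, as the key step linking Munn conjugacy to semigroup conjugacy. So your argument effectively front-loads that computation; if one adopts your proof here, the later proof of Theorem~\ref{munnEqualSconjugacy} becomes almost immediate.
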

\begin{proof}
It follows from the definition of $\sigma^\circ$ that the range $J(\sigma^\circ)$ of $\sigma^\circ$ is the same as the range of $\sigma^k$ for some $k\ge 1.$ Thus $J(\sigma^\circ)$ is a face of $P$, since the range of $\sigma^k\in R$ is a face of $P$. Notice that the domain $I(\sigma^\circ)$ of $\sigma^\circ$ is equal to $J(\sigma^\circ)$. Then $e_{_{I{(\sigma^\circ)}}} \in R.$ Let $\sigma = w e_I$ for some $w\in W$. Then $ \sigma^\circ = \sigma|_{_{I(\sigma^\circ)}} = w e_{_{I{(\sigma^\circ)}}} \in R.$ \hfill $\Box$
\end{proof}

\begin{defi}\label{conjugate} Let $W$ be the unit group of $R$. Then two elements $\sigma, \tau \in R$ are called Munn {\em conjugate}, denoted by $\sigma \approx \tau$, if there exists $w\in W$ such that $w^{-1}\sigma^\circ w = \tau^\circ$. The Munn conjugacy class of $\sigma$ is denoted by $[\sigma]$.
\end{defi}

Two elements in the rook monoid $R_m$ are Munn conjugate if and only if they have the same cycle pattern \cite{M2}. There are 7 Munn conjugacy classes in $R_3$ with representatives:
\noindent
$
0, ~(1)[2][3], ~(1)(2)[3], ~(12)[3], ~(12)(3), ~(123), ~1.
$

\begin{theorem}\label{munnEqualSconjugacy} The Munn conjugacy and the semigroup conjugacy coincide in a Renner monoid $R$.
\end{theorem}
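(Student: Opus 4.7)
The plan is to invoke Theorem \ref{gandsconjugacy} of Kudryavtseva to reduce the semigroup conjugacy on $R$ to the unit-group conjugacy on invertible parts, and then to identify the invertible part $\sigma e_\sigma$ with the Munn restriction $\sigma^\circ$. Since $R$ is a finite factorizable inverse monoid (Section 2.2), it is a factorizable inverse epigroup, so Theorem \ref{gandsconjugacy} gives
\[
\sigma \equiv \tau \quad \text{if and only if} \quad \sigma e_\sigma \sim \tau e_\tau.
\]
Thus the whole proof reduces to establishing the key identity $\sigma^\circ = \sigma e_\sigma$ for every $\sigma \in R$.

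To prove this identity, I would use the embedding of $R$ into a rook monoid from Section 4.1. Write $\sigma = w e_I$ with $I = I(\sigma)$. Because $R$ is finite, the descending chain $I(\sigma) \supseteq I(\sigma^2) \supseteq \cdots$ stabilizes, so $I^\circ(\sigma) = I(\sigma^k)$ for all sufficiently large $k$; in particular $I^\circ(\sigma)$ is an actual face of $P$ and $e_{I^\circ(\sigma)} \in E(R)$. Since $\sigma$ is a partial injection sending $I^\circ(\sigma)$ into itself, it restricts to a bijection of the finite set $I^\circ(\sigma)$. Consequently $\sigma^k$, for large $k$, has both domain and range equal to $I^\circ(\sigma)$ and lies in the maximal subgroup of $R$ with identity $e_{I^\circ(\sigma)}$; this identifies $e_\sigma = e_{I^\circ(\sigma)}$. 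A direct computation, using that idempotents $e_J$ and $e_K$ multiply by intersection of vertex sets and that $I^\circ(\sigma) \subseteq I$, then yields
\[
\sigma e_\sigma \;=\; w e_I \, e_{I^\circ(\sigma)} \;=\; w e_{I \cap I^\circ(\sigma)} \;=\; w e_{I^\circ(\sigma)} \;=\; \sigma^\circ.
\]

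With the identity $\sigma^\circ = \sigma e_\sigma$ in hand, the theorem follows immediately. By Definition \ref{conjugate}, $\sigma \approx \tau$ means $w^{-1}\sigma^\circ w = \tau^\circ$ for some $w \in W$, which, after replacing $w$ by $w^{-1}$, is exactly $\sigma^\circ \sim \tau^\circ$, that is, $\sigma e_\sigma \sim \tau e_\tau$. By Theorem \ref{gandsconjugacy} this is equivalent to $\sigma \equiv \tau$, completing the argument.

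The main obstacle is the bridging identity $\sigma^\circ = \sigma e_\sigma$, which translates the combinatorial ``restricted iterate'' description coming from the embedding in a rook monoid into the abstract ``invertible part'' appearing in Kudryavtseva's theorem. The delicate step within that identity is the verification that $\sigma$ acts as a bijection on the stabilized domain $I^\circ(\sigma)$, which relies both on elements of $R$ being partial injections (via the rook-monoid embedding) and on $I^\circ(\sigma)$ being a genuine face of $P$, so that $e_{I^\circ(\sigma)}$ really is an idempotent of $R$ and not just of the ambient rook monoid.
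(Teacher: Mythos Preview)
Your proposal is correct and follows essentially the same route as the paper: both invoke Theorem~\ref{gandsconjugacy} after identifying the invertible part $\sigma e_\sigma$ with $\sigma^\circ$ via the decomposition $\sigma = w e_I$ and the observation that $e_\sigma = e_{I^\circ(\sigma)}$. The only cosmetic difference is that the paper pins down $I^\circ(\sigma)$ as the finite intersection $I\cap w(I)\cap\cdots\cap w^{k-1}(I)$ using the period $k$ of $I$ under $w$, whereas you phrase it via stabilization of the descending chain of domains; these are equivalent.
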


\begin{proof}
Let $\sigma = w e_{I(\sigma)}\in R$, where $w\in W$ and $I(\sigma)$ is the domain of $\sigma$. Write $I = I(\sigma)$. Then
we have
\[
    \sigma^2 = we_I we_I= w^2 e_{w^{-1}(I)} e_I = w^2 e_{I \cap w^{-1}(I)}.
\]
Let $k$ be the smallest positive integer for which $w^k(I) = I$. Then
\[
    I^\circ(\sigma) = I\cap w(I)\dots \cap w^{k-1}(I).
\]
Thus $\sigma^k = w^k e_{I^\circ(\sigma)} = e_{I^\circ(\sigma)} w^k \in H_{\sigma^k}$ with the identity element $e_{I^\circ(\sigma)}$. So the invertible part of $\sigma$
is $\sigma e_{I^\circ(\sigma)} = w e_{I(\sigma)} e_{I^\circ(\sigma)} = w e_{I^\circ(\sigma)} = \sigma^\circ$. It follows from
Theorem \ref{gandsconjugacy} and Definition \ref{conjugate} that the Munn conjugacy coincides with the semigroup conjugacy
in $R$.
$\hfill\Box$
\end{proof}

We would like to point out that the result in Theorem 4.4 can also be deduced from results in \cite{KM0}. In fact, we can prove directly that the Munn conjugation coincides with the action conjugation in $R$. Indeed, if $\sigma, \tau\in R$ are Munn conjugate, then there is $w\in W$ such that $w\cdot \sigma^\circ = w \sigma^\circ w^{-1} = \tau^\circ.$ But $e_\sigma \cdot \sigma = e_\sigma \sigma e_\sigma  = \sigma e_\sigma =\sigma^\circ$. It follows that $(we_\sigma)\cdot \sigma = \tau^\circ$. Notice that $e_\tau \cdot \tau = \tau^\circ$. Thus $\sigma$ and $\tau$ are action conjugate. On the other hand, if $\sigma$ and $\tau$ are primary action conjugate, then (without loss of generality) there is $\alpha\in R$ such that $\alpha^{-1}\alpha \ge e_\sigma$ and $\alpha\cdot \sigma = \tau$.  Following the line of \cite{KM0}, we obtain that $\sigma^\circ$ and $\tau^\circ$ are semigroup conjugate. Thanks to Corollary 6 in \cite{Ku} and Theorem \ref{gandsconjugacy}, we see that $\sigma$ and $\tau$ are Munn conjugate. It follows easily that if $\sigma$ and $\tau$ are action conjugate, then they are Munn conjugate, since action conjugacy is the transitive closure of the primary action conjugacy and the Munn conjugacy is an equivalency relation.

\begin{corollary}\label{allEqual}
The action conjuate, character conjuate, Munn conjugate, and semigroup conjugate are all the same in a Renner monoid.
\end{corollary}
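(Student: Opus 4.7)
The plan is to chain together the equalities of the four conjugacy relations on $R$, relying on what has just been established plus the general results of \cite{KM0} applied to the structure of a Renner monoid.

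First I would record the structural facts about $R$ that are needed: a Renner monoid is a finite factorizable inverse monoid, hence in particular a finite inverse epigroup, and being finite it automatically has finite $\mathcal{D}$-classes and is a regular epigroup. These are exactly the hypotheses required by the theorems in \cite{KM0} that link semigroup conjugacy with the other two notions.

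Next, I would assemble the three pairwise coincidences. Theorem \ref{munnEqualSconjugacy} gives Munn conjugacy $=$ semigroup conjugacy in $R$. The remark following that theorem shows Munn conjugacy $=$ action conjugacy in $R$ by using the identification $\sigma^\circ = \sigma e_\sigma = e_\sigma\cdot\sigma$ together with Corollary 6 of \cite{Ku} and Theorem \ref{gandsconjugacy}: primary action conjugate elements have $\sim$-conjugate invertible parts (hence are Munn conjugate), and conversely the Munn relation realises a primary action conjugation through $we_\sigma$, so taking transitive closures on both sides preserves the equality because Munn conjugacy is already transitive. Finally, from \cite{KM0} semigroup conjugacy and character conjugacy agree in any regular epigroup with finite $\mathcal{D}$-classes, so this equality also holds in $R$.

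Chaining the three pairwise equalities yields the corollary. No serious obstacle is expected; essentially the only thing to be careful about is that $R$ really satisfies the hypotheses of each cited result from \cite{KM0}, and this is immediate from the first paragraph. The substantive content has been done in Theorem \ref{munnEqualSconjugacy} and the accompanying remark, so the proof reduces to citing \cite{KM0} for the character conjugacy piece and combining.
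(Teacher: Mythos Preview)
Your proposal is correct and follows essentially the same approach as the paper. The paper's proof is even terser: it simply invokes Corollary~5 of \cite{KM0} (which, for a finite inverse monoid, already identifies action, character, and semigroup conjugacy), relying on the just-proved Theorem~\ref{munnEqualSconjugacy} and the accompanying remark to bring Munn conjugacy into the chain---exactly the ingredients you spell out.
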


\begin{proof}
The result follows from Corollary 5 of \cite{KM0}, since the Renner monoid is a finite inverse monoid.
\end{proof}

\begin{corollary}
Two idempotents are in the same Munn conjugacy class if and only if they are $\sim$-conjugate with each other.
\end{corollary}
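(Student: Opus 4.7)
The plan is to reduce both conjugacies to the same group-theoretic equation by observing that every idempotent coincides with its own invertible part. Concretely, I would first verify that if $e \in E(R)$, then $e^\circ = e$. This holds because $e^2 = e$ forces $e^k = e$ for all $k \ge 1$, so
\[
    I^\circ(e) = \bigcap_{k=0}^{\infty} I(e^k) = I(e),
\]
and hence $e^\circ = e|_{I(e)} = e$. Equivalently, writing $e = e_K$ for some face $K \in \mathcal F(P)$, one sees directly from (\ref{edef}) that $e$ fixes every point of its domain, so the iteration stabilizes immediately.

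With this observation in hand, the equivalence drops out of the two definitions. If $e, f \in E(R)$ are Munn conjugate, Definition \ref{conjugate} supplies $w \in W$ with $w^{-1} e^\circ w = f^\circ$; substituting $e^\circ = e$ and $f^\circ = f$ yields $w^{-1} e w = f$, which is exactly the $\sim$-conjugacy relation. Conversely, if $w^{-1} e w = f$ for some $w \in W$, the same substitution gives $w^{-1} e^\circ w = f^\circ$, so $e \approx f$.

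There is no real obstacle here; the only thing to double-check is the claim $e^\circ = e$ for idempotents, which is essentially a one-line verification from the definition of $I^\circ$. The statement could alternatively be deduced as a special case of Theorem \ref{munnEqualSconjugacy} combined with the fact that for idempotents the semigroup conjugacy collapses to $\sim$-conjugacy (since all idempotents of $R$ are $W$-conjugate to elements of $\Lambda$ and any primary $S$-conjugacy $e = xy$, $f = yx$ among idempotents of a finite inverse monoid can be upgraded to a unit conjugacy), but the direct argument via $e^\circ = e$ is shorter and self-contained.
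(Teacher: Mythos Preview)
Your proposal is correct. The paper in fact offers no explicit proof for this corollary; it is stated as an immediate consequence, and your observation that $e^\circ = e$ for every idempotent $e$ (since $e^k = e$ forces $I^\circ(e) = I(e)$) is precisely what makes Definition~\ref{conjugate} collapse to the $\sim$-relation on $E(R)$, so your direct argument is exactly the intended one.
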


Note that Theorem \ref{munnEqualSconjugacy} also provides an algorithm to calculate the Munn conjugacy classes in R.
We give an example below.

\begin{exam}
{\rm Let $R$ be the first basic Renner monoid of type $A_2$. Then $R$ is isomorphic to the rook monoid $R_3$, and its unit group is the Weyl group generated by two simple
reflections $s_1$ and $s_2$. The cross section  lattice of $R$ is $\Lambda = \{0, ~e_0, ~e_1, ~1\}$ with $e_0$ the minimal
nonzero idempotent. By Example 3.8, there are 10 $\sim$-conjugacy classes in $R$, with representatives

\centerline{
$
\{~0,  ~e_0, ~s_1e_0, ~e_1, ~s_1e_1, ~s_2e_1, ~s_1s_2e_1, ~1, ~s_1, ~s_1s_2
~\}.
$
}
\noindent
Their corresponding invertible parts are

\centerline{
$
\{~0,  ~e_0, ~0, ~e_1, ~s_1e_1, ~e_0, ~0, ~1, ~s_1, ~s_1s_2 ~\},
$
}
\noindent
and hence $R$ has 7 Munn conjugacy classes

\centerline{
$
\{~0,  ~e_0, ~e_1, ~s_1e_1, ~1, ~s_1, ~s_1s_2 ~\}.
$
}

Similarly, since the first basic Renner monoid of type $B_2$ has 15 $\sim$-conjugacy classes

$
\{~0, ~e_0, ~s_1e_0, ~-e_0, ~e_1, ~s_1e_1, ~s_2e_1, ~s_1s_2e_1, ~s_2s_1s_2e_1, ~-e_1, ~1, ~s_1, ~s_2,~s_1s_2, ~-1~\},
$

\noindent
a simple calculation yields that the Renner monoid has 9 Munn conjugacy classes

\centerline{
$
\{~0, ~e_0, ~e_1, ~s_1e_1, ~1, ~s_1, ~s_2,~s_1s_2, ~-1~\}.
$
}

}

\end{exam}

\subsection{Projections of $R$ into its parabolic subgroups}
Directly generalizing Solomon \cite{LS2}, we now define a projection from $R$ to $W^*(e)$, where $e\in \Lambda$.
Recall that we use $P$ to denote the ploytope associated with $E(R)$.

\begin{proposition}\label{projectionMapping} Let $K, L$ be two faces of $P$. If $K$ and $L$ are in the same
$W$-orbit and the relative interior of $L$ intersects the closure of the fundamental Weyl chamber, then there
exists a unique element $w\in W$ with shortest length such that $w(L)=K$.
\end{proposition}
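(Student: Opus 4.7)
The plan is to reduce this to the classical fact that in a Coxeter system $(W,S)$, every left coset of a standard parabolic subgroup contains a unique element of minimal length. First I would fix any $u\in W$ with $u(L)=K$, which exists because $K$ and $L$ lie in the same $W$-orbit; then the solution set $\{w\in W \mid w(L)=K\}$ is precisely the left coset $u\,\mathrm{Stab}_W(L)$, where $\mathrm{Stab}_W(L)=\{w\in W \mid w(L)=L\}$ is the setwise stabilizer of $L$. So everything reduces to identifying $\mathrm{Stab}_W(L)$ as a standard parabolic subgroup and then invoking the existence-and-uniqueness theorem for minimal length coset representatives.

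Next I would pick a point $v$ in $\mathrm{relint}(L)\cap \overline{\mathcal C}$, whose existence is given by hypothesis. Because $\overline{\mathcal C}$ is a fundamental domain for the $W$-action on $V$, the isotropy group $\mathrm{Stab}_W(v)$ is the standard parabolic subgroup $W_J$ generated by those simple reflections $s_\alpha$ with $\alpha\in\Delta$ satisfying $s_\alpha(v)=v$; this is a classical result for Weyl groups (see \cite{H1}). I then need to show $\mathrm{Stab}_W(L)=\mathrm{Stab}_W(v)$. The inclusion $\mathrm{Stab}_W(v)\subseteq \mathrm{Stab}_W(L)$ is automatic since each $s_\alpha\in W_J$ fixes $v$ and therefore fixes the unique face of $P$ whose relative interior contains $v$, namely $L$. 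For the reverse inclusion, if $w\in \mathrm{Stab}_W(L)$, then $w(v)$ lies again in $\mathrm{relint}(L)$, and I would argue that $w(v)=v$ using that the $W$-orbit of $v\in\overline{\mathcal C}$ meets $\overline{\mathcal C}$ only at $v$ together with the fact that $\mathrm{relint}(L)$ meets $\overline{\mathcal C}$ in a single $W_J$-invariant subset on which $\mathrm{Stab}_W(L)/W_J$ would act freely (if it were nontrivial, it would produce a second point of $\overline{\mathcal C}$ in the $W$-orbit of $v$, a contradiction).

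Having shown $\mathrm{Stab}_W(L)=W_J$, the coset $u W_J$ is a left coset of a standard parabolic. The standard Coxeter-theoretic result (see Section 1.10 of \cite{H1}) says that such a coset has a unique representative $w_0$ of minimal length, characterized by $\ell(w_0 s_\alpha)=\ell(w_0)+1$ for every $\alpha\in J$; in the notation of Section 3 this is exactly the $D_*$-style set of distinguished representatives. This $w_0$ is then the unique shortest element with $w_0(L)=K$.

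The main obstacle is the second step: the identification $\mathrm{Stab}_W(L)=W_J$. One has to be careful to distinguish the setwise stabilizer of the face $L$ from the pointwise stabilizer of any particular point, and to use the hypothesis on $\mathrm{relint}(L)\cap \overline{\mathcal C}$ in an essential way. If this intersection were empty or not a relative interior intersection, the stabilizer need not be a standard parabolic, and the uniqueness of the minimal representative could fail; the hypothesis is precisely what places the coset in the setting of the classical Coxeter group lemma.
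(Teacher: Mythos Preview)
Your approach is essentially the same as the paper's: identify the solution set as a left coset of $W_L=\mathrm{Stab}_W(L)$, observe that the hypothesis on $\mathrm{relint}(L)\cap\overline{\mathcal C}$ makes $W_L$ a standard parabolic subgroup, and then invoke the unique minimal-length coset representative theorem. The paper simply asserts that $W_L$ is standard parabolic, whereas you attempt to justify it via $\mathrm{Stab}_W(L)=\mathrm{Stab}_W(v)$; your sketch of the reverse inclusion is a bit loose (it is not clear why a nontrivial element of $\mathrm{Stab}_W(L)/W_J$ would send $v$ to another point of $\overline{\mathcal C}$), but this does not diverge from the paper, which leaves the same step unproved.
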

\begin{proof}
It suffices to prove the uniqueness of such element with shortest length.  If $K=L$, then $w=1$. Now, suppose
that $K \ne L$. Let $W_L$ be the parabolic subgroup of $W$ that fixes $L$. Since the relative interior of $L$ intersects the closure of the fundamental
Weyl chamber, $W_L$ is a standard parabolic subgroup.  Let $W^L = \{u\in W \mid l(uv) = l(u) + l(v) \text{ for all } v\in W_L\}$. Each element $w\in W$ has a unique decomposition $w=uv$
where $u\in W^L$ and $v\in W_L$, and
\begin{eqnarray}\label{length}
l(w)=l(u)+l(v).
\end{eqnarray}
Assume $w(L) = w_1(L)=K$ for $w, w_1\in W$ such that $w$ and $w_1$ with shortest length. Then
$w, w_1\in W^L$ and $w_1^{-1}w\in W_L$. Hence, $w=w_1 x$ for some $x \in W_L$, and $l(w)=l(w_1)+l(x)$
by (\ref{length}). Thus $w=w_1$, since $l(w)=l(w_1)$ forces $x=1$. $\hfill\Box$

\end{proof}

Let $e\in \Lambda$ and $L$ be the unique face of $P$ for which $e = e_L$. Then the intersection of the relative
interior of $L$ and the closure of the fundamental Weyl chamber is not empty. It follows from Proposition
\ref{projectionMapping} that for any face $K$ in the orbit of $L$ under the action (\ref{action}) there is
$w\in W$ with the shortest length such that $wL = K$ and $w e_{L} w^{-1} = e_K$. Hence
$$
    \mu_K = w e_L
$$
is an element of $R$ and maps $L$ to $K$. Similarly, $\mu_K^-= w^{-1} e_K \in R$ is a map of $K$ onto $L$ and \begin{eqnarray}\label{kk}
    \mu_K\mu_K^- = w e_{L} w^{-1} e_K = e_K.
\end{eqnarray}

We now introduce the projection from $R = \bigsqcup_{e\in \Lambda} WeW$ to $W^*(e)$ for every $e\in \Lambda$. If $\sigma = e_J u e_I\in WeW$, define
\begin{eqnarray}\label{psigma}
    p(\sigma) = \mu_J^-\sigma\mu_I.
\end{eqnarray}
Then $p(\sigma)$ maps ${L}$ to ${L}$, and $p(\sigma) e = e p(\sigma)$ by (ii) of Lemma 3.3 in \cite{LLC}, that is, $p(\sigma) \in W(e)$. But then $p(\sigma) = e p(\sigma)$ forces  $p(\sigma)\in eW(e)$. However, $eW(e) = W^*(e)$; we have
$
    p(\sigma) \in W^*(e).
$
It follows from (\ref{kk}) that
$
    \sigma = \mu_J\mu_J^-\sigma\mu_I\mu_I^-  = \mu_J p(\sigma) \mu_I^-.
$
If $\sigma\in W$, then $p(\sigma) = \sigma \in W$.

 \begin{lemma}\label{projection}
 If $\sigma, \tau, \tau\sigma \in WeW$ and $J(\sigma) = I(\tau)$, then $p(\tau\sigma) = p(\tau)p(\sigma)$.
 \end{lemma}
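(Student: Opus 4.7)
The plan is to unfold the definition of $p$ on both sides and collapse the resulting product using two ingredients already established in the paper: equation (\ref{kk}), which says $\mu_K\mu_K^- = e_K$ for any face $K$ in the $W$-orbit of $L$, together with the factorization (\ref{sigmaJK}), which implies the idempotent-absorption identities $e_{J(\sigma)}\sigma = \sigma$ and $\tau e_{I(\tau)} = \tau$.

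First I would fix notation by setting $I = I(\sigma)$, $J = J(\sigma)$, and $K = J(\tau)$, so that the hypothesis $J(\sigma) = I(\tau)$ becomes $I(\tau) = J$. Since $\sigma$ is a bijection from $I$ onto $J$ and $\tau$ from $J$ onto $K$, the composite $\tau\sigma$ is a bijection from $I$ onto $K$; in particular $I(\tau\sigma) = I$ and $J(\tau\sigma) = K$. Because all three elements $\sigma,\tau,\tau\sigma$ lie in $WeW$, each of the faces $I,J,K$ lies in $\mathcal F(e)$, so the six elements $\mu_I,\mu_J,\mu_K,\mu_I^-,\mu_J^-,\mu_K^-$ are defined and eligible to appear in (\ref{psigma}).

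Next I would write out the three projections using (\ref{psigma}),
\[
    p(\sigma) = \mu_J^-\sigma\mu_I,\qquad p(\tau) = \mu_K^-\tau\mu_J,\qquad p(\tau\sigma) = \mu_K^-\tau\sigma\mu_I,
\]
so that, invoking (\ref{kk}) in the middle factor,
\[
    p(\tau)p(\sigma) \;=\; \mu_K^-\,\tau\,\mu_J\mu_J^-\,\sigma\,\mu_I \;=\; \mu_K^-\,\tau\, e_J\,\sigma\,\mu_I.
\]
The factorization (\ref{sigmaJK}) for $\sigma$ reads $\sigma = e_J w e_I$, whence $e_J\sigma = \sigma$; symmetrically for $\tau = e_K w_1 e_J$ we get $\tau e_J = \tau$. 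Substituting gives $p(\tau)p(\sigma) = \mu_K^-\tau\sigma\mu_I = p(\tau\sigma)$, as desired.

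The only care needed is bookkeeping: one must verify that the ``inner'' $\mu_J$ on the right of $p(\tau)$ matches the ``inner'' $\mu_J^-$ on the left of $p(\sigma)$, which is exactly what the hypothesis $J(\sigma) = I(\tau)$ guarantees, and that $p(\tau\sigma)$ is formed with the same outer $\mu_K^-$ and $\mu_I$, which follows from the domain/range identification $I(\tau\sigma)=I$, $J(\tau\sigma)=K$ noted above. Once these identifications are in place, the argument is essentially a one-line telescoping collapse; I do not anticipate any serious obstacle beyond clean notation.
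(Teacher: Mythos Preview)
Your proof is correct and follows essentially the same route as the paper's own argument: expand $p(\tau)p(\sigma)$, collapse $\mu_J\mu_J^- = e_J$ via (\ref{kk}), and absorb $e_J$ using the factorizations $\sigma = e_Jwe_I$, $\tau = e_Kw_1e_J$ from (\ref{sigmaJK}). If anything, you are slightly more careful than the paper in explicitly checking $I(\tau\sigma)=I$ and $J(\tau\sigma)=K$ before invoking (\ref{psigma}) for $\tau\sigma$.
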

 \begin{proof}
 Write $I = I(\sigma), J = J(\sigma)$, and $K = J(\tau)$. Then $\tau=e_K w_1 e_J$ and $\sigma = e_J w e_I$ for some $w_1, w \in W$. It follows that $\tau e_J\sigma = e_Kw_1 e_J w e_I =\tau\sigma$. So $p(\tau)p(\sigma) = \mu_K^- \tau \mu_J \mu_J^- \sigma \mu_I = \mu_K^- \tau e_J \sigma \mu_I = \mu_K^- \tau\sigma \mu_I = p(\tau\sigma)$.
 \end{proof}

\subsection{Connections to Representations}

\begin{defi}\label{subrank}
Let $\sigma\in R$. If $\sigma^\circ\in WeW$ for some $e\in \Lambda$, then $e$ is referred to as the subrank of $\sigma$.
\end{defi}

\begin{lemma}\label{basicLemma} Let $\sigma, \tau\in R$.

{\upshape{(i)}} If $\sigma \sim \tau$ then $\sigma \approx \tau$.

{\upshape{(ii)}} All elements in $[\sigma]$ have the same subrank.

{\upshape{(iii)}} If $e = e_L\in\Lambda$ for some $L\in \F(P)$ and $\sigma\in W^*(e)$, then $\sigma^\circ = \sigma$ and $I^\circ(\sigma) = L$.

\end{lemma}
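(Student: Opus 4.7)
The plan is to handle the three parts separately, each by a short direct argument that leverages the explicit description $I^\circ(\sigma) = \bigcap_{k\ge 0} I(\sigma^k)$ together with the embedding $R \hookrightarrow R_m$ from Section 4.1.

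For (i), suppose $\tau = u\sigma u^{-1}$ with $u\in W$. Since conjugation by a unit is a monoid automorphism of $R$, I would get $\tau^k = u\sigma^k u^{-1}$ for every $k\ge 1$. Reading this under the rook embedding, the domains transform as $I(\tau^k) = u\cdot I(\sigma^k)$, so intersecting over $k$ gives $I^\circ(\tau) = u\cdot I^\circ(\sigma)$. Restricting the identity $\tau = u\sigma u^{-1}$ to $I^\circ(\tau)$ then yields $\tau^\circ = u\sigma^\circ u^{-1}$. Setting $w = u^{-1}$ gives exactly the relation $w^{-1}\sigma^\circ w = \tau^\circ$ required by Definition \ref{conjugate}, so $\sigma \approx \tau$.

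For (ii), assume $\sigma \approx \tau$, so there is $w\in W$ with $w^{-1}\sigma^\circ w = \tau^\circ$. The two-sided cell $WeW$ is stable under conjugation by $W$: if $\sigma^\circ = aeb$ with $a,b\in W$, then $w^{-1}\sigma^\circ w = (w^{-1}a)e(bw) \in WeW$. Hence $\sigma^\circ \in WeW$ forces $\tau^\circ \in WeW$, so $\sigma$ and $\tau$ have the same subrank. This shows the subrank is constant on Munn conjugacy classes, justifying the phrase ``subrank of $[\sigma]$.''

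For (iii), I would interpret $\sigma \in W^*(e)$ through the identification $W^*(e) = eW(e) \subseteq R$ already used in Section 4.3, so $\sigma = e_L w$ for some $w \in W(e)$. Since $w$ commutes with $e = e_L$, the face $L$ is $w$-stable, i.e.\ $wL = L$, and therefore $we_L = e_L w$. Iterating,
\[
    \sigma^k = (e_L w)^k = e_L w^k,
\]
and because $w^k \in W(e)$ also fixes $L$ setwise, $I(\sigma^k) = (w^k)^{-1}(L) = L$ for every $k\ge 1$. Consequently $I^\circ(\sigma) = \bigcap_{k\ge 0} I(\sigma^k) = L$, and since $L$ is already the full domain of $\sigma$, we conclude $\sigma^\circ = \sigma|_L = \sigma$.

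The only real obstacle is notational: one must keep straight the two incarnations of $W^*(e)$, namely the standard parabolic subgroup $W_{\lambda^*(e)}$ of $W$ and its image $eW^*(e) \subseteq R$ under left multiplication by $e$. Once this identification is made explicit, each of the three claims reduces to the short calculation above, and no delicate properties of the polytope $P$ beyond the elementary rule (\ref{action}) are needed.
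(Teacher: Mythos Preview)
Your proof is correct and follows essentially the same approach as the paper's, which disposes of (i) by asserting $w^{-1}\sigma^\circ w = \tau^\circ$ in one line, calls (ii) ``straightforward from Definitions \ref{conjugate} and \ref{subrank}'', and for (iii) simply notes $\sigma\in eW(e)=W(e)e$ implies $\sigma=e_L\sigma e_L$. You have supplied the domain-tracking computations that the paper leaves implicit, but the underlying argument is the same.
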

\begin{proof}
For (i), since $\sigma$ and $\tau$ are conjugate, we have $w^{-1}\sigma w = \tau$ for some $w\in W$. Hence $w^{-1}\sigma^\circ w = \tau^\circ$, in other words, $\sigma \approx \tau$. Result (ii) is straightforward from Definitions \ref{conjugate} and \ref{subrank}. As for (iii), notice that $\sigma\in eW(e) = W(e)e$. We obtain that $\sigma = e\sigma e = e_L\sigma e_L$. Thus (iii) follows.  $\hfill\Box$
\end{proof}

\begin{lemma} \label{meetW}
Let $\sigma\in R$ with subrank $e\in \Lambda$. Then $[\sigma]$ meets one and only one parabolic subgroup of the form
$\{W^*(f) \mid f \in \Lambda\}$. Specifically, $[\sigma]$ meets $W^*(e)$.
\end{lemma}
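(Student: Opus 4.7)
The strategy is to exhibit an explicit representative of $[\sigma]$ lying in $W^*(e)$ by applying the projection $p$ of (\ref{psigma}) to $\sigma^\circ$, and then to deduce uniqueness from the invariance of the subrank on Munn classes together with the disjointness of the cross-section decomposition $R = \bigsqcup_{f\in\Lambda} WfW$.

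\textbf{Existence.} Write $e = e_L$ for the face $L\in \F(P)$ with $e=e_L$. By Definition \ref{subrank} we have $\sigma^\circ \in WeW$, so the face $K := I^\circ(\sigma)$ (which coincides with $J^\circ(\sigma)$ by the identity $\sigma^k = w^k e_{I^\circ(\sigma)} = e_{I^\circ(\sigma)} w^k$ recorded in the proof of Theorem \ref{munnEqualSconjugacy}) lies in the $W$-orbit of $L$. Proposition \ref{projectionMapping} yields a unique shortest $w_0 \in W$ with $w_0 L = K$, and then $\mu_K = w_0 e_L$ and $\mu_K^- = w_0^{-1} e_K$. Using $w_0 e_L = e_K w_0$ and that $e_K \sigma^\circ = \sigma^\circ = \sigma^\circ e_K$ (because $\sigma^\circ$ has domain and range equal to $K$), a direct computation shows
\[
    p(\sigma^\circ) \;=\; \mu_K^- \sigma^\circ \mu_K \;=\; w_0^{-1}\, e_K\, \sigma^\circ\, w_0\, e_L \;=\; w_0^{-1}\, \sigma^\circ\, w_0.
\]
Set $\tau := p(\sigma^\circ)$, which lies in $W^*(e)$ by the construction of $p$ in subsection 4.3. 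Lemma \ref{basicLemma}(iii) then gives $\tau^\circ = \tau$, so $w_0^{-1}\sigma^\circ w_0 = \tau^\circ$, and Definition \ref{conjugate} yields $\sigma \approx \tau$. Hence $\tau \in [\sigma] \cap W^*(e)$.

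\textbf{Uniqueness.} Suppose $\tau' \in [\sigma] \cap W^*(f)$ for some $f \in \Lambda$. By Lemma \ref{basicLemma}(ii), the subrank of $\tau'$ equals that of $\sigma$, namely $e$, so $(\tau')^\circ \in WeW$. On the other hand, Lemma \ref{basicLemma}(iii) applied to $\tau' \in W^*(f)$ gives $(\tau')^\circ = \tau' \in W^*(f) \subseteq WfW$. Since $R = \bigsqcup_{g\in\Lambda} WgW$ is a disjoint union, this forces $f = e$.

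\textbf{Main obstacle.} The only nontrivial verification is the identity $p(\sigma^\circ) = w_0^{-1}\sigma^\circ w_0$; it rests on the structural fact that $\sigma^\circ$ has both domain and range equal to $K$, which was extracted in the proof of Theorem \ref{munnEqualSconjugacy}. Once this reduction is in place, the existence part is a clean conjugation, and uniqueness is an immediate consequence of Lemma \ref{basicLemma} and the disjointness of the double cosets $WfW$.
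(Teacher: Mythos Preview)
Your proof is correct and follows essentially the same route as the paper: both exhibit $p(\sigma^\circ)=w^{-1}\sigma^\circ w\in W^*(e)$ as the desired representative via the identities $\mu_K=w e_L$, $\mu_K^-=w^{-1}e_K$, and $e_K\sigma^\circ e_K=\sigma^\circ$, then invoke Lemma~\ref{basicLemma}(iii) to conclude $p(\sigma^\circ)\in[\sigma]$; your uniqueness argument just spells out more explicitly what the paper compresses into the single phrase ``all elements of $[\sigma]$ have the same subrank.''
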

\begin{proof} Let $I$ be the domain of $\sigma^\circ$. Then $\sigma^\circ = w_1 e_I \in WeW$ for some $w_1\in W$.
Let $L$ be the unique face of $P$ such that $e = e_L$. It follows from (ii) of Lemma 3.3 in \cite{LLC} that $I$ is
in the orbit of $L$ under the action (\ref{action}), and so $wL = I$ for some $w\in W$. Thus
$
    p(\sigma^\circ) = \mu_I^- \sigma^\circ \mu_I = (e_Lw^{-1}) \sigma^\circ (we_L).
$
We have
\begin{align}\label{psigmaquan}
    p(\sigma^\circ) = w^{-1} \sigma^\circ w.
\end{align}
But $p(\sigma^\circ)^\circ = p(\sigma^\circ)$, since $p(\sigma^\circ)\in W^*(e)$. We see that $p(\sigma^\circ) \in [\sigma] \bigcap W^*(e)$. The uniqueness of
$W^*(e)$ follows from the fact that all the elements of $[\sigma]$ have the same subrank $e\in \Lambda.$
$\hfill\Box$
\end{proof}

The equality (\ref{psigmaquan}) implies the result below.
\begin{corollary}\label{sigmaPsigma} If $\sigma\in R$, then $[\sigma] = [p(\sigma^\circ)].$
\end{corollary}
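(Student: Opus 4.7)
The plan is to extract the corollary immediately from equation (\ref{psigmaquan}) that was just established in the proof of Lemma \ref{meetW}. That equation gives an explicit element $w \in W$ (namely the one with $wL = I$, where $L$ is the unique face of $P$ with $e = e_L$ and $I$ is the domain of $\sigma^\circ$) satisfying
\[
    p(\sigma^\circ) = w^{-1} \sigma^\circ w.
\]
This is already almost the definition of Munn conjugacy, so the remaining step is to identify the right-hand side with the invertible part of some element.

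First, I would invoke Lemma \ref{basicLemma}(iii) applied to $p(\sigma^\circ)$. Since $p(\sigma^\circ) \in W^*(e)$ by construction, that lemma yields $(p(\sigma^\circ))^\circ = p(\sigma^\circ)$. Combining this with the displayed equality above gives
\[
    w^{-1} \sigma^\circ w = (p(\sigma^\circ))^\circ,
\]
which is precisely the condition in Definition \ref{conjugate} for $\sigma \approx p(\sigma^\circ)$. Hence $p(\sigma^\circ) \in [\sigma]$, and because Munn conjugacy is an equivalence relation (by Theorem \ref{munnEqualSconjugacy}, it coincides with the semigroup conjugacy, whose transitive closure has already been taken), we conclude $[\sigma] = [p(\sigma^\circ)]$.

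There is essentially no obstacle: the heavy lifting was done in setting up $p$, in proving Lemma \ref{meetW} (where equation (\ref{psigmaquan}) was produced), and in Lemma \ref{basicLemma}(iii) (which guarantees that elements of $W^*(e)$ are their own invertible parts). The proof is a one-line bookkeeping argument that assembles these three ingredients, so in the final write-up I would simply cite (\ref{psigmaquan}) and Lemma \ref{basicLemma}(iii), and conclude via Definition \ref{conjugate}.
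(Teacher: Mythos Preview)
Your proposal is correct and follows exactly the approach the paper intends: the paper simply states that equality (\ref{psigmaquan}) implies the corollary, and you have unpacked this by invoking Lemma \ref{basicLemma}(iii) and Definition \ref{conjugate}. The only unnecessary detour is your appeal to Theorem \ref{munnEqualSconjugacy} for the equivalence-relation property of $\approx$, which already follows directly from Definition \ref{conjugate} since $\sim$ on $W$ is an equivalence relation.
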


\begin{lemma}\label{conjugacyClass} If $\sigma\in R$ has subrank $e\in \Lambda$, then
$[\sigma] \cap W^*(e) = \overline{p(\sigma^\circ)}$, the $\sim$-conjugate class of $p(\sigma^\circ)$ in $W^*(e).$
\end{lemma}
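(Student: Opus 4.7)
The plan is to establish the equality by double inclusion, relying on Corollary \ref{sigmaPsigma}, Lemma \ref{basicLemma}, and the direct product decomposition $W(e) = W^*(e)\times W_*(e)$ from Proposition \ref{idemlemma}(iii). The inclusion $\overline{p(\sigma^\circ)} \subseteq [\sigma]\cap W^*(e)$ is essentially immediate: if $g\in W^*(e)$, then $g^{-1}p(\sigma^\circ)g$ stays in $W^*(e)$ since the latter is a group, it is $\sim$-conjugate to $p(\sigma^\circ)$ in $R$ and hence Munn conjugate to $p(\sigma^\circ)$ by Lemma \ref{basicLemma}(i), and finally Munn conjugate to $\sigma$ by Corollary \ref{sigmaPsigma}.

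For the reverse inclusion $[\sigma]\cap W^*(e) \subseteq \overline{p(\sigma^\circ)}$, I would pick $\tau\in[\sigma]\cap W^*(e)$ and write $e = e_L$. By Lemma \ref{basicLemma}(iii) applied to $\tau$, one has $\tau^\circ = \tau$ and $I^\circ(\tau) = L$. The relation $\sigma\approx\tau$ then supplies some $w\in W$ with $w^{-1}\sigma^\circ w = \tau^\circ = \tau$. On the other hand, formula (\ref{psigmaquan}) yields $p(\sigma^\circ) = u^{-1}\sigma^\circ u$ for the distinguished shortest $u\in W$ satisfying $uL = I^\circ(\sigma)$. Setting $v = u^{-1}w$, I would obtain $\tau = v^{-1}p(\sigma^\circ)v$ in $R$. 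Matching domains on both sides of $w^{-1}\sigma^\circ w = \tau$ forces $wL = I^\circ(\sigma) = uL$, whence $vL = L$ and therefore $v \in W_L = W(e)$.

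The closing step is to promote $v$ from $W(e)$ into $W^*(e)$. Decomposing $v = v^*v_*$ with $v^*\in W^*(e)$ and $v_*\in W_*(e)$, and using that the two factors of the direct product commute element-wise together with $p(\sigma^\circ)\in W^*(e)$, the $v_*$ contribution cancels and leaves $\tau = (v^*)^{-1}p(\sigma^\circ)v^*$ with $v^*\in W^*(e)$, which places $\tau$ in $\overline{p(\sigma^\circ)}$. I expect the main obstacle to be exactly this last manipulation: the Munn-conjugator $w$ lives a priori only in $W$, and extracting a conjugator inside the smaller subgroup $W^*(e)$ requires first the domain-matching observation that lands $v$ in $W(e)$, and then the element-wise commutativity between $W^*(e)$ and $W_*(e)$ supplied by the direct product structure of $W(e)$.
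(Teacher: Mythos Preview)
Your argument is correct. The easy inclusion is handled just as in the paper, but for the hard inclusion $[\sigma]\cap W^*(e)\subseteq\overline{p(\sigma^\circ)}$ you take a genuinely different route. The paper inserts idempotents to rewrite $p(\sigma^\circ)=e(w^{-1}u^{-1})\,x\,(uw)e$ as a product of three elements of $WeW$ with matching domains and ranges, and then invokes the multiplicativity of the projection (Lemma~\ref{projection}) to obtain $p(\sigma^\circ)=p(uwe)^{-1}\,x\,p(uwe)$ with $p(uwe)\in W^*(e)$. You instead perform the domain-matching directly on $w^{-1}\sigma^\circ w=\tau$ to see that the conjugator $v=u^{-1}w$ already lies in $W(e)$, and then use the direct product $W(e)\cong W^*(e)\times W_*(e)$ from Proposition~\ref{idemlemma}(iii) to discard the $W_*(e)$-factor, which commutes with everything in $W^*(e)$ and fixes $e$. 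Your approach bypasses Lemma~\ref{projection} altogether and relies only on the standard structural facts about $W(e)$; the paper's approach keeps everything inside the projection formalism of Section~4.3 and would generalise more readily to settings where one has a projection map but not a convenient direct-product splitting. One small caution worth making explicit in your write-up: throughout this section $W^*(e)$ is being identified with $eW(e)\subseteq R$ (cf.\ the remark preceding Lemma~\ref{projection} and the proof of Lemma~\ref{basicLemma}(iii)), so when you conjugate by $v^*\in W_{\lambda^*(e)}\subseteq W$ you should note that this agrees with conjugation by $ev^*$ inside the group $eW(e)$, which is what is meant by $\sim$-conjugacy in $W^*(e)$.
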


\begin{proof}
It suffices to show that if $x \in [\sigma] \cap W^*(e)$, then $x$ is conjugate to $p(\sigma^\circ)$ in $W^*(e).$ Clearly $\sigma^\circ = u^{-1} x^\circ u$ for some $u\in W$, since $x \in [\sigma]$. Now $p(\sigma^\circ) = w^{-1} \sigma^\circ w$ for some $w\in W$ by (\ref{psigmaquan}). However, $x^\circ = x$ due to $x\in W^*(e)$. Thus
\begin{equation*}
    p(\sigma^\circ) = ep(\sigma^\circ)e = ew^{-1}u^{-1} x uwe.
\end{equation*}
Note that $ew^{-1}u^{-1}, x, uwe \in WeW$, $J(uwe) = I(x)$, and $J(x)=I(ew^{-1}u^{-1})$. In view of Lemma \ref{projection}, we have
\begin{align}
     p(\sigma^\circ) &= p(ew^{-1}u^{-1}) p(x) p(uwe) \notag \\
                     &= p(uwe)^{-1} \,x\, p(uwe),
\end{align}
where $p(uwe)\in W^*(e)$. It follows that $[\sigma] \cap W^*(e) \subseteq \overline{p(\sigma^\circ)}$. The reverse inclusion is easily seen. $\hfill\Box$

\end{proof}

\begin{corollary} Let $\sigma, \tau \in W^*(e)$, where $e\in \Lambda$. Then $\sigma\sim \tau$ in $W^*(e)$ if and only if $\sigma\approx \tau$ in $R$.
\end{corollary}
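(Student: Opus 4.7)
The plan is to derive the statement directly from Lemmas \ref{basicLemma} and \ref{conjugacyClass}; once the right identifications are in place, the proof is short and splits cleanly into two directions.

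For the forward direction, suppose $\sigma \sim \tau$ in $W^*(e)$, so $u\sigma u^{-1} = \tau$ for some $u \in W^*(e)$. Since $W^*(e)$ sits inside $W$, the unit group of $R$, this same equation exhibits $\sigma \sim \tau$ in $R$. Lemma \ref{basicLemma}(i) then upgrades $\sim$-conjugacy to Munn conjugacy, yielding $\sigma \approx \tau$.

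For the converse, assume $\sigma \approx \tau$ with $\sigma, \tau \in W^*(e)$. By Lemma \ref{basicLemma}(iii), $\sigma^\circ = \sigma$ and $I^\circ(\sigma) = L$, where $L$ is the face with $e = e_L$; in particular $I(\sigma) = J(\sigma) = L$. Plugging this into the definition (\ref{psigma}) of $p$, the auxiliary elements $\mu_{I(\sigma)}$ and $\mu_{J(\sigma)}$ both reduce to $e_L$, giving $p(\sigma^\circ) = e_L \sigma e_L = \sigma$. Lemma \ref{conjugacyClass} then yields
\[
    [\sigma] \cap W^*(e) = \overline{p(\sigma^\circ)} = \overline{\sigma},
\]
the $\sim$-conjugacy class of $\sigma$ inside $W^*(e)$. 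Since $\tau$ lies in both $W^*(e)$ and $[\sigma]$, we conclude $\tau \in \overline{\sigma}$, that is, $\sigma \sim \tau$ in $W^*(e)$.

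The only subtle point is verifying the identity $p(\sigma) = \sigma$ for $\sigma \in W^*(e)$, which requires recognizing that $I(\sigma) = J(\sigma) = L$ so that the shortest-length Weyl elements defining $\mu_{I(\sigma)}$ and $\mu_{J(\sigma)}$ are trivial. This is really a bookkeeping consequence of Lemma \ref{basicLemma}(iii) rather than a genuine obstacle, and all of the substantive work has already been done in Lemma \ref{conjugacyClass}.
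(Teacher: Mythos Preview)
Your proof is correct and follows the same route as the paper, which simply declares the result ``straightforward from Lemma~\ref{conjugacyClass}''; you have spelled out exactly how that lemma is applied, including the useful observation that $p(\sigma^\circ)=\sigma$ for $\sigma\in W^*(e)$. One small point worth tightening: when you say ``$W^*(e)$ sits inside $W$'', bear in mind that in this section $W^*(e)$ is being identified with $eW(e)\subseteq R$, so the conjugating element $u=ew$ (with $w\in W_{\lambda^*(e)}$) is not literally in $W$; rather, one lifts it to $w\in W(e)\subseteq W$ and checks $w\sigma w^{-1}=\tau$, which is what actually yields $\sigma\sim\tau$ in $R$ --- alternatively, the forward direction drops out of the easy inclusion $\overline{\sigma}\subseteq[\sigma]\cap W^*(e)$ in Lemma~\ref{conjugacyClass} without any lifting at all.
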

\begin{proof}
It is straightforward from Lemma \ref{conjugacyClass}. $\hfill\Box$
\end{proof}

\begin{theorem}\label{numOfClasses}
The number of Munn conjugacy classes of a Renner monoid $R$ equals the total number of all $\sim$-conjugate classes of $W^*(e)$ for all $e\in\Lambda$.
\end{theorem}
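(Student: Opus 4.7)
The plan is to construct an explicit bijection between the set of Munn conjugacy classes of $R$ and the disjoint union $\bigsqcup_{e\in\Lambda} C(W^*(e))$, where $C(W^*(e))$ denotes the set of $\sim$-conjugacy classes of $W^*(e)$. All the ingredients are already in place from the preceding lemmas; the task is really one of assembly.

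I would begin by defining the forward map $\Phi$. Given a Munn conjugacy class $[\sigma]$, Lemma \ref{basicLemma}(ii) shows that every element of $[\sigma]$ has the same subrank $e \in \Lambda$, so $e$ is determined by the class. Lemma \ref{meetW} then says that $[\sigma]$ meets $W^*(e)$ and no other $W^*(f)$, while Lemma \ref{conjugacyClass} identifies $[\sigma] \cap W^*(e)$ precisely as the single $\sim$-conjugacy class $\overline{p(\sigma^\circ)}$ of $W^*(e)$. Setting $\Phi([\sigma]) = \overline{p(\sigma^\circ)}$ is therefore unambiguous. For the inverse map $\Psi$, I send $\overline{x} \in C(W^*(e))$ to the Munn class $[x]$ in $R$; the corollary immediately preceding this theorem guarantees this is independent of the representative, since $\sim$-conjugate elements of $W^*(e)$ are automatically Munn conjugate in $R$.

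To verify $\Psi \circ \Phi = \mathrm{id}$, I apply Corollary \ref{sigmaPsigma} to get $\Psi(\Phi([\sigma])) = [p(\sigma^\circ)] = [\sigma]$. For $\Phi \circ \Psi = \mathrm{id}$, the key observation is that if $x \in W^*(e)$ with $e = e_L$, then Lemma \ref{basicLemma}(iii) gives $x^\circ = x$; moreover, $x \in eW(e) = W(e)e$ forces $ex = x = xe$, hence $I(x) = J(x) = L$. The shortest $w \in W$ satisfying $wL = L$ is $w = 1$, so $\mu_L = \mu_L^- = e$ and $p(x) = exe = x$. Thus $\Phi(\Psi(\overline{x})) = \Phi([x]) = \overline{p(x^\circ)} = \overline{x}$, and the two maps are mutual inverses. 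Counting gives the theorem.

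The main obstacle is not a conceptual one but a bookkeeping one: three distinct notions of conjugacy (the monoid $\sim$ on $R$, the Munn $\approx$ on $R$, and the group $\sim$ on each $W^*(e)$) occur side by side, and each application of the projection $p$ must be checked to land on an element of the appropriate $WeW$. Once these identifications are kept straight, the theorem is simply a packaging of Lemmas \ref{basicLemma}, \ref{meetW}, \ref{conjugacyClass}, Corollary \ref{sigmaPsigma}, and the intervening corollary, with the short computation $p(x) = x$ for $x \in W^*(e)$ as the only nontrivial verification.
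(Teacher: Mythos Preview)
Your proof is correct and follows essentially the same route as the paper's: both arguments rest on Lemmas \ref{meetW} and \ref{conjugacyClass} to show that each Munn class meets exactly one $W^*(e)$ in exactly one $\sim$-class there. Your version is simply more explicit, writing down the bijection and its inverse and supplying the small computation $p(x)=x$ for $x\in W^*(e)$ that the paper's one-line appeal to Lemma \ref{conjugacyClass} leaves implicit.
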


\begin{proof} It follows from Lemma \ref{conjugacyClass} that the Munn conjugate classes of $R$ that meet $W^*(e)$
are indexed by conjugate classes of $W^*(e)$ for $e\in \Lambda$. Thanks to Lemma \ref{meetW} and $R = \bigsqcup_{e\in \Lambda} WeW$, the desired result follows. $\hfill\Box$
\end{proof}

\begin{theorem} \label{mainTheorem}
The number of inequivalent irreducible representations of a Renner monoid $R$ over an algebraically closed field of characteristic zero equals the total number of Munn conjugacy classes in $R$.
\end{theorem}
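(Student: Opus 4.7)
The plan is to combine Theorem \ref{numOfClasses} with the representation-theoretic classification of irreducible representations of a Renner monoid given in \cite{LLC}. By Theorem \ref{numOfClasses}, the number of Munn conjugacy classes in $R$ equals
\[
    \sum_{e\in \Lambda} k(W^*(e)),
\]
where $k(W^*(e))$ denotes the number of ordinary conjugacy classes of the parabolic subgroup $W^*(e)$. So it suffices to identify the right-hand side with the number of inequivalent irreducible representations of $R$ over an algebraically closed field $F$ of characteristic zero.

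The first step is to invoke the Munn--Solomon style classification established in \cite{LLC}: the inequivalent irreducible representations of $R$ over $F$ are parametrized by pairs $(e, \rho)$ with $e\in \Lambda$ and $\rho$ an inequivalent irreducible representation of $W^*(e)$. Consequently the total number of inequivalent irreducible representations of $R$ is
\[
    \sum_{e\in \Lambda} r(W^*(e)),
\]
where $r(W^*(e))$ denotes the number of inequivalent irreducible representations of the finite group $W^*(e)$ over $F$.

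The second step is purely group-theoretic. Each $W^*(e)$ is a (finite) parabolic subgroup of the Weyl group $W$, so $|W^*(e)|$ is invertible in $F$. By Maschke's theorem together with the standard theorem relating the number of irreducible complex representations of a finite group to the number of its conjugacy classes, we have $r(W^*(e)) = k(W^*(e))$ for every $e \in \Lambda$.

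Combining these three ingredients yields
\[
    \#\{\text{irreducible representations of } R\}
    \;=\; \sum_{e\in \Lambda} r(W^*(e))
    \;=\; \sum_{e\in \Lambda} k(W^*(e))
    \;=\; \#\{\text{Munn conjugacy classes of } R\},
\]
as desired. The main obstacle here is not the combinatorics but ensuring that the precise parametrization of irreducible representations of $R$ from \cite{LLC} is stated and applied correctly; once that result is in hand, Theorem \ref{numOfClasses} and the classical group character theorem close the argument immediately.
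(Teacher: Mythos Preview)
Your proposal is correct and follows essentially the same route as the paper: invoke Theorem \ref{numOfClasses}, invoke the classification of irreducible $R$-representations from \cite{LLC} in terms of the irreducible representations of the groups $W^*(e)$ for $e\in\Lambda$, and then use the standard fact that for a finite group over an algebraically closed field of characteristic zero the number of irreducibles equals the number of conjugacy classes. The paper's proof is just a terser version of the same three steps.
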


\begin{proof}
Theorem 3.2 of \cite{LLC} shows that the full set of inequivalent irreducible representations of $R$ over a field of characteristic zero is completely determined by a full set of inequivalent irreducible representations of $W^*(e)$ for all $e\in\Lambda$. On the other hand, from group representation theory, the number of irreducible representations of $W^*(e)$ is the same as the number of the conjugate classes of $W^*(e)$. The result we want follows from Theorem \ref{numOfClasses}. $\hfill\Box$
\end{proof}


\begin{corollary} Let $p(r)$ be the number of partitions of $r$ for $0\le r \le m.$ Then the number of Munn conjugacy classes of the symmetric inverse semigroup $R_m$ is
\[
    \sum_{r = 0}^m p(r).
\]
\end{corollary}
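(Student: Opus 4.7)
The plan is to reduce the count directly to Theorem \ref{numOfClasses}, exploiting the well-known description of $R_m$ as the Renner monoid of the matrix monoid $M_m(K)$.

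First I would recall that $R_m$ is the Renner monoid of $M_m(K)$, the full $m\times m$ matrix monoid over $K$, which is a reductive monoid of type $A_{m-1}$ whose unit group is $GL_m(K)$ with Weyl group $W\cong S_m$. Taking $T$ to be the diagonal torus and $B$ the upper triangular matrices, the cross section lattice is
\[
\Lambda = \{e_0, e_1, \ldots, e_m\}, \qquad e_r = \mathrm{diag}(\underbrace{1,\ldots,1}_{r},\underbrace{0,\ldots,0}_{m-r}),
\]
so $\Lambda$ is a chain with $e_0 = 0$ and $e_m = 1$.

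Next I would compute $\lambda^*(e_r)$ for each $r$. Writing $\Delta=\{\alpha_1,\ldots,\alpha_{m-1}\}$ for the simple roots with $s_{\alpha_i}=(i,i+1)$, a direct check shows that $s_{\alpha_i}e_r = e_r s_{\alpha_i}\ne e_r$ precisely when $i<r$, while $s_{\alpha_i}e_r=e_r$ when $i>r$ (and the one remaining simple reflection, if $0<r<m$, fails to commute with $e_r$). Hence
\[
\lambda^*(e_r) = \{\alpha_1,\ldots,\alpha_{r-1}\}, \qquad \text{so}\qquad W^*(e_r)\cong S_r,
\]
acting as the symmetric group on $\{1,\ldots,r\}$. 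In the boundary cases, $W^*(e_0)$ is trivial and $W^*(e_m)=S_m$, matching $p(0)=1$.

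Finally I would invoke Theorem \ref{numOfClasses}, which states that the number of Munn conjugacy classes of $R_m$ equals $\sum_{e\in\Lambda}|\overline{W^*(e)}|$, where $\overline{W^*(e)}$ denotes the set of ordinary conjugacy classes. Since the number of conjugacy classes of $S_r$ is $p(r)$, summing over $r=0,1,\ldots,m$ yields the claimed formula $\sum_{r=0}^{m} p(r)$. There is no substantive obstacle: the only small point requiring care is the identification of $W^*(e_r)$ as the standard parabolic $S_r$ (as opposed to $S_{m-r}$ or a product), which is dictated by the choice that $B$ is upper triangular and $e_r$ has its $1$'s in the top-left block.
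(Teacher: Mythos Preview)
Your argument is correct, but it is not the route the paper takes. The paper deduces the corollary from Theorem~\ref{mainTheorem} together with Solomon's classification of the irreducible representations of $R_m$ (\cite{LS2}, Theorem~2.24): since the irreducibles of $R_m$ are indexed by partitions of $r$ for $0\le r\le m$, and the number of irreducibles equals the number of Munn classes, the formula follows. You instead bypass representation theory entirely and appeal directly to Theorem~\ref{numOfClasses}, computing $W^*(e_r)\cong S_r$ explicitly and using that $S_r$ has $p(r)$ conjugacy classes. Your approach is more self-contained---it needs no external input from \cite{LS2} and only the most classical fact about symmetric groups---while the paper's approach showcases Theorem~\ref{mainTheorem} as an application. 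Both are short; yours is arguably the more natural one given that Theorem~\ref{numOfClasses} already does all the combinatorial work.
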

\begin{proof}
It follows form \cite{LS2} Theorem 2.24 that there is a one-to-one correspondence between the set of inequivalent irreducible representations of $R_m$ and the set of all the partitions of $r$ with $0\le r \le m.$ The desired result follows from Theorem \ref{mainTheorem}.
$\hfill\Box$
\end{proof}

In a sequel paper we will investigate the relationship between conjugacy classes of a finite inverse monoid
and its representations.

\vspace{3mm}
{\bf Acknowledgments}
We would like to thank the referee for bringing \cite{KM0} to our attention and for many valuable suggestions, which make the paper deeper and more readable. The authors also thank Dr. Reginald Koo for his useful comments.

\vspace{3mm}
\noindent Zhuo Li\\
Department of Mathematics \\
Xiangtan University\\
Xiangtan, Hunan 411105, China\\
\noindent Email: zli@xtu.edu.cn\\

\noindent Zhenheng Li\\
Department of Mathematical Sciences \\
University of South Carolina Aiken\\
Aiken, SC 29801, USA\\
\noindent Email: zhenhengl@usca.edu\\

\noindent You'an Cao\\
Department of Mathematics \\
Xiangtan University\\
Xiangtan, Hunan 411105, China\\
\noindent Email: cya@xtu.edu.cn\\

\end{document}